\documentclass[11pt, twoside, a4paper, english, reqno]{amsart}
\usepackage{amssymb,amsfonts,latexsym,color}
\usepackage{graphics,verbatim}
\usepackage{graphicx}
\usepackage{hyperref}
\setlength{\topmargin}{-.5cm}
\setlength{\textheight}{23cm}
\setlength{\evensidemargin}{0.0cm}
\setlength{\oddsidemargin}{01.2cm}
\setlength{\textwidth}{15.1cm}

\newtheorem{theorem}{Theorem}[section]
\newtheorem{proposition}{Proposition}[section]
\newtheorem{lemma}{Lemma}[section]
\newtheorem{corollary}{Corollary}[section]
\newtheorem{remark}{Remark}[section]

\newcommand{\<}{\left\langle}
\renewcommand{\>}{\right\rangle}

\newcommand{\eps}{\varepsilon}


\newcommand{\be} {\begin{equation}}
\newcommand{\ee} {\end{equation}}
\newcommand{\bea} {\begin{eqnarray}}
\newcommand{\eea} {\end{eqnarray}}
\newcommand{\Bea} {\begin{eqnarray*}}
	\newcommand{\Eea} {\end{eqnarray*}}

\newcommand{\al} {\alpha}
\newcommand{\ba} {\beta}
\newcommand{\de} {\delta}

\newcommand{\Om} {\Omega}

\newcommand{\De} {\Delta}
\newcommand{\la} {\lambda}

\newcommand{\lab} {\label}

\newcommand{\f}{\frac}

\newcommand{\R}{\mathbb R}
\newcommand{\N}{\mathbb N}
\newcommand{\Rn}{\mathbb R^N}
\newcommand{\Iom}{\int_{\Omega}}
\newcommand{\deb}{\rightharpoonup}

\catcode`\@=11

\makeatletter \@addtoreset{equation}{section} \makeatother

\begin{document}
	
	\title[On the existence results for nonlocal system
	with lack of compactness]{Non-homogeneous $(p_1,p_2)$-fractional Laplacian systems with lack of compactness}

\author{Debangana Mukherjee}
\address{Department of Mathematics, Indian Institute of Science Education and Research, Dr. Homi Bhaba Road, Pune, 411008, India}
\email{debangana18@gmail.com}
	\author{Tuhina Mukherjee}
\address{Department of Mathematics, Indian Institute of Technology Jodhpur, Rajasthan-506004, India-342037 }
\email{tulimukh@gmail.com, tuhina@iitj.ac.in}
	
	\subjclass[2010]{Primary 35R11, 35J20, 49J35,  secondary 47G20, 45G05}
	\keywords{$(p_1,p_2)$- Fractional Laplacian, Elliptic system, Critical exponent, Non homogeneous equations. }
	\date{}

 \begin{abstract}
The present paper studies the existence of weak solutions for following type of non-homogeneous system of equations
\begin{equation*}
(S)
\left\{\begin{aligned}
(-\Delta)^{s_1}_{p_1} u  &=u|u|^{\al-1}|v|^{\ba+1}+f_1(x)  \,\mbox{ in }\, \Om, \\
(-\Delta)^{s_2}_{p_2} v  &=|u|^{\al+1}v|v|^{\ba-1}+f_2(x) \,\mbox{ in }\, \Om, \\
u=v &= 0  \,\mbox{ in }\, \Rn \setminus \Om, \\
\end{aligned}
\right.
\end{equation*}
where $\Om \subset \Rn$ is smooth bounded domain, $s_1,s_2 \in (0,1)$, $1<p_1,p_2<\infty$, $N>\max\{p_1s_1,p_2s_2\}$, $\alpha>-1$ and $\beta>-1$. We employ the variational techniques where the associated energy functional is minimized over Nehari manifold set while imposing appropriate bound on dual norms of $f_1,f_2$.
\end{abstract}

\maketitle

\section{Introduction}
The study of elliptic equations and systems involving fractional and $p$-fractional Laplace operators have dynamized a lot of application in nonlinear analysis. In recent years, enormous attention is given to the research on very delicate reading about the existence and multiplicity of solutions for such elliptic problems. 
The present article deals with non-homogeneous system of equations involving $(p_1,p_2)$-type fractional operators $(-\De)^{s_i}_{p_i}, i=1,2$ defined as:
\begin{equation}\label{frac}
(-\Delta)_{p_i}^{s_i}u(x) := \lim_{\epsilon\to 0}\int_{\mathbb R^N\setminus B_\epsilon(x)} \frac{|u(x)-u(y)|^{p_i-2}(u(x)-u(y))}{|x-y|^{N+p_is_i}}~dy,\;\;\text{for}\;\; x \in \mathbb R^N
\end{equation}
where $u\in C_c^\infty(\R^N)$ and $B_\epsilon(x)$ denotes ball in $\R^N$ with centre $x$ and radius $\epsilon$. 
When $s_1=s_2$ the equation reduces to a $(p_1,p_2)$-type Laplacian problem which appears in a more general reaction-diffusion system 
\begin{equation}\label{intro-1}
    u_t =\text{div}(a(u)\nabla u)+ g(x,u)
\end{equation}
where $a(u)= |\nabla u|^{p-2}\nabla u+ |\nabla u|^{q-2}\nabla u$. Such problems have a wide range of applications in physics and related sciences such as biophysics, plasma physics, and chemical reaction
design, etc. where $u$ describes a concentration, and the first
term on the right-hand side of \eqref{intro-1} corresponds to a diffusion with a diffusion coefficient $a(u)$; the term
$g(x,u)$ stands for the reaction, related to sources and energy-loss processes. A lot of attention has been given to the study of $(p_1,p_2)$-Laplace equations in the last few years, for instance, we cite some attributing references
\cite{faria-miyagaki, deepak, MP, sidi, tanaka, yang-yin}.

We are attentive in considering the system of non-homogeneous equations involving $((s_1,p_1),(s_2,p_2))$ fractional Laplacian operators. In case $s_1=s_2=1$, the non-homogeneous elliptic systems were initially researched by J. Chabrowski \cite{chabro}  where the author has established multiple solutions for 
\begin{equation*}
\begin{aligned}
(-\Delta)_{p_1} u  &=\la u|u|^{\al-1}|v|^{\ba+1}+f_1  \,\mbox{ in }\, \Om, \\
(-\Delta)_{p_2} v  &=\la |u|^{\al+1}v|v|^{\ba-1}+f_2 \,\mbox{ in }\, \Om, \\
u=v &= 0  \,\mbox{ in }\, \R^N \setminus \Om, \\
\end{aligned}
\end{equation*}
where $\Om \subset \Rn$ is bounded, $\la>0$, $1<p_1,p_2<N$, $-1<\alpha,\beta$.  If we take $s_1=s_2=1$, then our problem $(S)$ is also studied by J. Velin \cite{velin-1} over a regular bounded set $\Om$ and by Benmouloud et al. \cite{BES} over $\Om$ any open set. The couple of papers have adopted variational techniques with enthralling use of a maximum norm over the product function space. Later on, a class of $(p,q)$-Laplacian system with indefinite weights have been learned by Afrouzi et al. in \cite{afrouzi} via local minimization techniques.
For interested readers, we point out some striking articles \cite{shivaji, HA, miyagaki, rasouli-2, rasouli-1} which deals with various kind of elliptic systems involving the $(p,q)$-Laplace operator. 
In \cite{shivaji}, the authors have analyzed the existence of positive solutions to the non-cooperative fractional elliptic system and the scheme heavily relies upon the Schauder fixed point theorem. Using the mechanism of sub-super solutions, the authors in \cite{HA} have achieved captivating results about the existence of a weak solution.


Driven by the above research, our prospect in this article is to establish the existence results for $(S)$. There are very few and sparse works till now pledging with $((s_1,p_1), (s_2,p_2))$- fractional Laplacian system of equations, for instance, we cite \cite{deba-tuhi-1}. We also present a very recent and stirring article \cite{mousomi-1} which deals with non-homogeneous semilinear fractional system with lack of compactness. The essence of our article lies in the fact that we contemplate on systems of equation with fractional $(p_1,p_2)$-Laplace operator, particularly, the quasilinear case over a bounded domain which is first of its kind in literature. Moreover, we are not assuming any fixed relation between $s_1,s_2$. Since the nonlinearity is of critical growth, we face a lack of compactness situation even in the bounded domain case. {The approach to handle our problem is variational and we do a local minimization to establish existence results}. 

\textit{Organisation of the paper}: The present paper is classified into the following sections- Section 2 which is again divided into four parts- first part deals with the suitable function spaces with which we have worked on,
second part intents to cover some notations and symbols used throughout the paper,  third part is schemed for some hypotheses needed to establish our main outcome and the last part is designed for our main result. Section 3 embraces study of the problem over bounded domains and the proof of our main result.

\section{Functional Setting and Main result}

\subsection{Functional Setting}
 In this section, we interpret appropriate function spaces which are imperative for our analysis. Let $ p> 1,\,s\in(0,1),\, N>ps,\, p_s^*:=\frac{Np}{N-sp}.$ We denote the standard fractional Sobolev space by $W^{s,p}(\Omega)$ endowed with the norm
$$
\|{u}\|_{W^{s,p}(\Om)}:=\|{u}\|_{L^p(\Om)}+\left(\int_{\Om\times\Om} \frac{|u(x)-u(y)|^p}{|x-y|^{N+sp}}dxdy\right)^{1/p}.
$$
We set $Q:=\R^{2N}\setminus (\Om^c \times \Om^c)$, where $\Om^c=\Rn \setminus \Om$ and define $$
X_{s,p}(\Om):=\Big\{u:\mathbb{R}^N\to\mathbb{R}\mbox{ measurable }\Big|u|_{\Omega}\in L^p(\Omega)\mbox{ and }
\int_{Q} \frac{|u(x)-u(y)|^p}{|x-y|^{N+sp}}dxdy<\infty\Big\}.
$$
The space $X_{s,p}(\Om)$ is endowed with the norm defined as
$$\|u\|_{s,p}:=|u|_{L^p(\Om)}+\left(\int_{Q} \frac{|u(x)-u(y)|^p}{|x-y|^{N+sp}}dxdy\right)^{1/p}.$$
We note that in general $W^{s,p}(\Om)$ is not same as $X_{s,p}(\Om)$ as $\Om\times\Om$ is strictly contained in $Q$.
We define the space $X_{0,s,p}(\Om)$ as
$$X_{0,s,p}(\Om) :=\Big\{u \in X_{s,p} : u=0 \quad\text{a.e. in}\quad \Rn \setminus \Om\Big\} $$
or equivalently
as $\overline{C_0^\infty(\Om)}^{X_{s,p}(\Om)}$. It is well-known that for $p>1$,  $X_{0,s,p}(\Om)$ is a uniformly convex Banach space endowed with the norm
$$[u]_{s,p}:=\|u\|_{0,s,p}=\left(\int_{Q} \frac{|u(x)-u(y)|^p}{|x-y|^{N+sp}}dxdy\right)^{1/p}.$$	

Since $u=0$ in $\Rn\setminus\Om,$ the above integral can be extended to all of $\mathbb{R}^N.$ The embedding
$X_{0,s,p}(\Om)\hookrightarrow L^r(\Om)$ is continuous for any $r\in[1,p^*_s]$ and compact for $r\in[1,p^*_s).$
 Moreover, for $1<p_2 \leq p_1$, $X_{0,s_1,p_1}(\Om)\subset X_{0,s_2,p_2}(\Om)$ (see Lemma 2.2 in Section 2 of \cite{Bhakta-Mukh}).

In the present article, we work with the product space
$$ X=X_{0,s_1,p_1}(\Om) \times X_{0,s_2,p_2}(\Om)
$$
equipped with the norm
$$\|(u,v)\|=\max\{[u]_{s_1,p_1}, [v]_{s_2,p_2}\}$$
such that 
$(X,\|\cdot\|)$ forms a reflexive, separable Banach space. If we consider the norm 
$$\|(u,v)\|_0 = [u]_{s_1,p_1}+ [v]_{s_2,p_2}$$
over the space $X$ then it is easy to verify that $\|\cdot\|_0$ is equivalent to the norm $\|\cdot\|$ over $X$.
We represent the dual space of $X$ as $X^*$ and $\<\cdot,\cdot\>$ denotes the duality pairing of $X$ and $X^*$.

\subsection{Notations and symbols}
In this section, we announce some notations and symbols which will be used all through the paper.
\begin{itemize}
\item
For $i=1,2$, $s_i\in(0,1)$, we define
 $$\dot{W}^{s_i,p_i}(\Rn):=\bigg\{u\in L^{p_i^*}(\Rn) : \displaystyle\int_{\R^{2N}} \frac{|u(x)-u(y)|^p}{|x-y|^{N+s_ip_i}}\,dxdy<\infty\bigg\}$$ and 
\be\lab{S}S_{p_i}=\inf_{u\in\dot{W}^{s_i,p_i}(\Rn)\setminus\{0\}} \frac{\displaystyle\int_{\R^{2N}} \frac{|u(x)-u(y)|^p}{|x-y|^{N+s_ip_i}}dxdy}{\bigg(\displaystyle\int_{\Rn}|u|^{p^*_i}\bigg)^\frac{p}{p^*_i}},\ee
where $p_i^* = \frac{Np_i}{N-s_ip_i}$.
    \item 
 For $r>0, t>0$, we set
\begin{equation*}
\begin{aligned}
&a(t)=\frac{1}{t}-\frac{1}{\al+\ba+2},\;\; b(r,t)=\frac{(r+1)(t-1)}{(\al+\ba+2)(\al+\ba+1)},\\
&c(t)=\frac{(\al+\ba+2-t)}{\al+\ba+1},\;\; d(r,t)=\frac{1}{\f{(\al+1)}{p'_1r^{p'_1}}+\f{(\ba+1)}{p'_2t^{p'_2}}},
\end{aligned}
\end{equation*}
where $p_i^\prime=\frac{p_i}{p_i-1}$ for $i=1,2$. 
\item
We signify
\begin{equation*}
\begin{aligned}
&\eps_1=(\al+1)d(\nu,\mu)\bigg( c(p_1)-\frac{\nu^{p_1}}{p_1}\bigg)\bigg(b(\al,p_1)\min (S_{p_1}^{p^*_1/p_1},  S_{p_2}^{p^*_2/p_2} ) \bigg)^{\frac{p_1}{p_1^*-p_1}}\\
&\eps_2=(\ba+1)d(\nu,\mu)\bigg( c(p_2)-\frac{\mu^{p_2}}{p_2}\bigg)\bigg(b(\ba,p_2)\min (S_{p_1}^{p^*_1/p_1},  S_{p_2}^{p^*_2/p_2} ) \bigg)^{\frac{p_2}{p_2^*-p_2}},
\end{aligned}
\end{equation*}
where $\nu$ and $\mu$ are fixed numbers such that 
$$ 0<\nu< [p_1\,c(p_1)]^{\frac{1}{p_1}},\,\, 0<\mu< [p_2   \, c(p_2)]^{\frac{1}{p_2}}.
$$
\item
 For $i=1,2$, we set
$$\<f_i,u \>_i= \int_\Om f_iu_i~dx.$$
\end{itemize}

\subsection{Hypothesis}
We propose the following hypotheses which will be employed in the entire article.
	\begin{itemize}
		\item [$\textit(H_1)$]
		$(\al+1) \frac{(N-s_1 p_1)}{N p_1} + (\ba+1) \frac{(N-s_2 p_2)}{N p_2}=1$.
		\item[$\textit(H_2)$]
		$\max\{p_1, p_2\}< \al+\ba+2$
		\item[$\textit(H_3)$]
{$0< \|f_1\|_{X^*_{0,s_1,p_1}(\Om)}+\|f_2\|_{X^*_{0,s_2,p_2}(\Om)}<\min \{ \eps_1,\eps_2,1  \}$.}
		\end{itemize}

\subsection{Main result}
Towards the primary objective, we are involved in analyzing 
the following problem
\begin{equation*}
(\mathcal{P}_{f_1,f_2})
\left\{\begin{aligned}
(-\Delta)^{s_1}_{p_1} u  &=u|u|^{\al-1}|v|^{\ba+1}+f_1(x)  \,\mbox{ in }\, \Om, \\
(-\Delta)^{s_2}_{p_2} v  &=|u|^{\al+1}v|v|^{\ba-1}+f_2(x) \,\mbox{ in }\, \Om, \\
u=v &= 0  \,\mbox{ in }\, \Rn \setminus \Om, \\
\end{aligned}
\right.
\end{equation*}
where $\Om \subset \Rn$ is a smooth bounded domain, $s_1,s_2 \in (0,1)$; $f_1$ and $f_2$ are not identically zero on $\Om$. Furthermore, $\al>-1$, $\ba>-1$.

Our goal is to exercise on the existence of solutions for the above model of elliptic system associated with the critical Sobolev exponents in the nonlocal framework. We fetch this result
when $f_1$ and $f_2$ are chosen small in the sense of dual norm. The outcome of our research is planted in the sequential steps:
\begin{itemize}
    \item 
    some fundamental results are obtained;
    \item
    the associated energy functional has a negative infimum on a suitable manifold;
    \item
    a minimization solution to the problem $(\mathcal{P}_{f_1, f_2})$ is achieved;
    \item
    some property of this minimizing solution is also retrieved;
    \item
    the existence of a weak solution for 
    the problem $(P_{f_1, f_2})$ is accomplished.
\end{itemize}

Our main result in this manuscript concerning $(\mathcal P_{f_1,f_2})$ is demonstrated below:
\begin{theorem}\label{thm-main}
	Let $f_i  \in X_{0,s_i,p_i}(\Om)^*$ for $i=1,2$ and $\Om$ be sufficiently smooth bounded domain in $\mathbb R^N$. We furthermore assume that $\textit(H_1), \textit(H_2), \textit(H_3) $ holds true.
	Then there exists $(u^*,v^*) \in \Theta$ which solves  $(\mathcal{P}_{f_1, f_2})$ and satisfies $\mathcal{I}(u^*,v^*)<0$.
\end{theorem}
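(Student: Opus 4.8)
The plan is to obtain $(u^*,v^*)$ as a constrained minimizer of the energy functional associated to $(\mathcal{P}_{f_1,f_2})$, namely
$$\mathcal{I}(u,v)=\f{1}{p_1}[u]_{s_1,p_1}^{p_1}+\f{1}{p_2}[v]_{s_2,p_2}^{p_2}-\f{1}{\al+\ba+2}\Iom |u|^{\al+1}|v|^{\ba+1}\,dx-\<f_1,u\>_1-\<f_2,v\>_2,$$
which is of class $C^1$ on $X$ (the superlinear pieces $\f{1}{p_i}[\cdot]_{s_i,p_i}^{p_i}$ are smooth, so the non-smoothness of the max-norm is irrelevant) and whose critical points are precisely the weak solutions of the system. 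First I would record that, by $(H_1)$, the coupling exponents satisfy $\f{\al+1}{p_1^*}+\f{\ba+1}{p_2^*}=1$, so Hölder's inequality with the conjugate pair $\big(\f{p_1^*}{\al+1},\f{p_2^*}{\ba+1}\big)$ together with the continuous embeddings $X_{0,s_i,p_i}(\Om)\hookrightarrow L^{p_i^*}(\Om)$ controls the coupling integral by $[u]_{s_1,p_1}^{\al+1}[v]_{s_2,p_2}^{\ba+1}$ up to the Sobolev constants $S_{p_i}$. Hypothesis $(H_2)$, i.e. $\max\{p_1,p_2\}<\al+\ba+2$, then shows that along each ray $t\mapsto\mathcal{I}(tu,tv)$ the super-homogeneous coupling term eventually dominates, so $\mathcal{I}$ is unbounded below on $X$ and global minimization is impossible.

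To remedy this I would minimize over the Nehari manifold $\Theta=\{(u,v)\in X\setminus\{(0,0)\}:\langle\mathcal{I}'(u,v),(u,v)\rangle=0\}$ and analyse the fibering maps $t\mapsto\mathcal{I}(tu,tv)$. Substituting the Nehari constraint back into $\mathcal{I}$ eliminates the coupling integral and leaves the expression $a(p_1)[u]_{s_1,p_1}^{p_1}+a(p_2)[v]_{s_2,p_2}^{p_2}$ minus a damped multiple of $\<f_1,u\>_1+\<f_2,v\>_2$; since $a(p_i)=\f{1}{p_i}-\f{1}{\al+\ba+2}>0$ by $(H_2)$, this shows $\mathcal{I}$ is coercive and bounded below on $\Theta$. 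The auxiliary quantities $b,c,d,\nu,\mu$ and the thresholds $\eps_1,\eps_2$ enter precisely here: using the dual-norm smallness in $(H_3)$, i.e. $\|f_1\|+\|f_2\|<\min\{\eps_1,\eps_2,1\}$, one checks that $\Theta$ splits as $\Theta^+\cup\Theta^-$ with $\Theta^0=\varnothing$, so that no degenerate fibering point occurs and $\Theta$ is a genuine $C^1$ manifold. Because $f_1,f_2\not\equiv0$, testing $\mathcal{I}$ on small multiples of a fixed admissible pair makes the linear terms dominate, yielding $c:=\inf_{\Theta}\mathcal{I}<0$; this is the level at which I would seek a minimizer.

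The heart of the argument, and the main obstacle, is the lack of compactness caused by the critical exponents: a minimizing sequence $(u_n,v_n)\subset\Theta$ is bounded by coercivity, hence $(u_n,v_n)\deb(u^*,v^*)$ weakly in $X$ along a subsequence, but the embeddings into $L^{p_i^*}(\Om)$ are only continuous, so the coupling integral may lose mass through concentration. I would control this by a concentration-compactness / Brezis--Lieb splitting of $\int|u_n|^{\al+1}|v_n|^{\ba+1}$, expressing the energy carried by any escaping bubble in terms of $S_{p_1}$ and $S_{p_2}$. The role of $(H_3)$ is exactly to force the level $c$ to lie strictly below the first energy threshold at which such a bubble could form — this threshold being encoded in $\eps_1,\eps_2$ through the factors $(S_{p_1}^{p_1^*/p_1},S_{p_2}^{p_2^*/p_2})$ — thereby ruling out concentration and upgrading the weak convergence to strong convergence in $L^{p_i^*}(\Om)$. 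Consequently $(u^*,v^*)\in\Theta$ and $\mathcal{I}(u^*,v^*)=c<0$.

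Finally I would verify that the constrained minimizer is a free critical point. Since $\Theta^0=\varnothing$ under $(H_3)$, the Nehari constraint is nondegenerate, so the Lagrange multiplier produced by minimizing $\mathcal{I}$ on $\Theta$ must vanish; equivalently, the fibering map of $(u^*,v^*)$ has an interior critical point, forcing $\mathcal{I}'(u^*,v^*)=0$ in $X^*$. Hence $(u^*,v^*)$ is a weak solution of $(\mathcal{P}_{f_1,f_2})$ lying in $\Theta$ with negative energy, which is the assertion of Theorem \ref{thm-main}. I expect the threshold estimate of the third step — matching the minimizing level $c$ against the explicit constants $\eps_1,\eps_2$ to exclude bubbling — to be the genuinely delicate point, while the coercivity, the fibering analysis, and the multiplier argument are comparatively routine.
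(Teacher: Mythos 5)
Your overall skeleton --- minimize $\mathcal{I}$ on the Nehari set $\Theta$, show the infimum $m$ is negative, use the smallness of the dual norms to make the constraint nondegenerate, and kill the Lagrange multiplier --- is indeed the paper's strategy. But there are two concrete problems. First, your functional is not the right one: with $\mathcal{I}(u,v)=\frac1{p_1}[u]_{s_1,p_1}^{p_1}+\frac1{p_2}[v]_{s_2,p_2}^{p_2}-\frac1{\al+\ba+2}\Iom|u|^{\al+1}|v|^{\ba+1}\,dx-\<f_1,u\>_1-\<f_2,v\>_2$, the $u$-equation of $\mathcal{I}'(u,v)=0$ reads $(-\De)^{s_1}_{p_1}u=\frac{\al+1}{\al+\ba+2}\,u|u|^{\al-1}|v|^{\ba+1}+f_1$, which is not $(\mathcal{P}_{f_1,f_2})$, and since $f_1,f_2$ are prescribed you cannot rescale the discrepancy away. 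The paper weights the terms by $\frac{\al+1}{p_1}$, $\frac{\ba+1}{p_2}$, $1$, $(\al+1)$, $(\ba+1)$ precisely so that the factors $(\al+1)$ and $(\ba+1)$ cancel in each Euler--Lagrange equation. Relatedly, because $p_1\neq p_2$ in general, the fibering map $t\mapsto\mathcal{I}(tu,tv)$ involves the three distinct powers $t^{p_1}$, $t^{p_2}$, $t^{\al+\ba+2}$ plus a linear term, so the classical two-branch splitting $\Theta=\Theta^+\cup\Theta^-$ with $\Theta^0=\emptyset$ is not available in its usual form. The paper performs no fibering analysis: it proves directly (Lemmas \ref{lem-5} and \ref{lem-6}) that $\mathcal{K}'(u,v)\neq0$ on $\Theta$ under $(H_3)$, and, quantitatively, that $|\<\mathcal{K}'(u_k,v_k),(u_k,v_k)\>|\geq\de_0>0$ along minimizing sequences (Proposition \ref{prop}); combined with the Ekeland estimate $\<\mathcal{I}'|_{\Theta}(u_k,v_k),(u_k,v_k)\>\to0$, this is what forces $\la_k\to0$.

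Second, and more seriously, your compactness step is a genuine gap. You propose a Brezis--Lieb/concentration-compactness splitting of $\Iom|u_k|^{\al+1}|v_k|^{\ba+1}\,dx$ and assert that $(H_3)$ places the level $m$ strictly below the first bubbling threshold, so that weak convergence upgrades to strong convergence in $L^{p_i^*}(\Om)$. You never verify this strict inequality, nor state the vector-valued concentration lemma needed for a coupled critical term with two different exponents $p_1^*,p_2^*$; and this is not how $(H_3)$ functions in the paper, where $\eps_1,\eps_2$ arise as lower bounds for $[\tilde u]_{s_1,p_1}$ and $[\tilde v]_{s_2,p_2}$ at hypothetical degenerate points of the constraint, not as energy thresholds. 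The paper avoids strong $L^{p_i^*}$ convergence altogether: once the multiplier vanishes it passes to the limit in the weak formulation using only the boundedness of $M_k=u_k|u_k|^{\al-1}|v_k|^{\ba+1}$ and $N_k$ in $L^{(p_i^*)'}(\Om)$ and $L^1(\Om)$, a.e.\ convergence plus Egoroff, and the weak $L^{p_i'}(\R^{2N})$ convergence of the nonlinear difference quotients; the facts that $(u^*,v^*)\in\Theta$ and $\mathcal{I}(u^*,v^*)=m<0$ then follow by testing the limit equation with $(u^*,v^*)$ and by the weak lower semicontinuity of the restricted functional $\mathcal{I}|_{\Theta}$ together with Lemma \ref{lem-4}. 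If you wish to keep your route, the threshold inequality you flag as ``the genuinely delicate point'' must actually be proved; as written, the existence of the minimizer on $\Theta$ is not established.
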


In the upcoming section, we meticulously take care of the some elementary lemmas, followed by some pivotal results to study our problem $(\mathcal{P}_{f_1, f_2})$ in bounded domain designed in the nonlocal pattern.

\section{ System over bounded domain}
In this section, we move our attention in proving the existence of solutions for the elliptic system $(\mathcal{P}_{f_1, f_2})$ involving critical Sobolev exponents.
Owing to the nonlocal picture, we design the methodology in a very delicate manner using suitable ingredients by a local minimization for an adapted variational problem in the $(s_i,p_i)$-fractional scheme for $i=1,2$.
We consider the corresponding energy functional $\mathcal{I}:X \to \mathbb R$ as
\begin{align*}
\mathcal{I}(u_1,u_2)&=\frac{\al+1}{p_1}[u_1]^{p_1}_{s_1,p_1}+\frac{\ba+1}{p_2}[u_2]^{p_2}_{s_2,p_2}-\Iom |u_1|^{\al+1}|u_2|^{\ba+1}\,dx\\
&\quad -(\al+1)\<f_1,u_1\>_1-(\ba+1)\<f_2,u_2\>_2
\end{align*}
for $(u_1,u_2)\in X$.
Let $\Theta \subset X$ be defined as
\begin{gather}\label{Theta}
\Theta:=\big\{(u,v) \in X \setminus \{0\}: \<\mathcal{I}'(u,v), (u,v)\>=0   \big\}
\end{gather}
which gives us that
\begin{equation*}
\begin{aligned}
\mathcal{I}_{|\Theta}(u,v)&=(\al+1)a(p_1)[u]_{s_1,p_1}^{p_1}+(\ba+1)a(p_2)[v]_{s_2,p_2}^{p_2}\\
&\qquad-(\al+1)a(1)\<f_1,u\>_1-(\ba+1) a(1)\<f_2,v\>_2.
\end{aligned}
\end{equation*}
\subsection{Some Preliminary Lemmas}
To obtain our main result, we need the following preliminary results by a local minimization. 
\begin{lemma}\label{lem-1}
	Let $\Theta$ be defined as in (\ref{Theta}) then $\Theta \neq \emptyset.$
\end{lemma}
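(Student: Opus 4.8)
The plan is to exploit the fibering (Nehari) structure of the functional. For a fixed pair $(u,v)\in X\setminus\{0\}$ I would study the scalar map
$$\psi(t):=\<\mathcal I'(tu,tv),(tu,tv)\>,\qquad t>0,$$
so that, by the very definition \eqref{Theta}, the point $(t_0u,t_0v)$ lies in $\Theta$ precisely when $t_0>0$ and $\psi(t_0)=0$. A direct differentiation of $\mathcal I(tu,tv)$ gives
$$\psi(t)=(\al+1)t^{p_1}[u]_{s_1,p_1}^{p_1}+(\ba+1)t^{p_2}[v]_{s_2,p_2}^{p_2}-(\al+\ba+2)t^{\al+\ba+2}\Iom|u|^{\al+1}|v|^{\ba+1}\,dx-t\,C(u,v),$$
where I abbreviate $C(u,v):=(\al+1)\<f_1,u\>_1+(\ba+1)\<f_2,v\>_2$. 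The strategy is then to choose $(u,v)$ so that $\psi$ is positive for small $t$ and negative for large $t$, and to close the argument with the intermediate value theorem.

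The two sign conditions are controlled by the hypotheses. The behaviour as $t\to+\infty$ is governed by $\textit(H_2)$: since $\max\{p_1,p_2\}<\al+\ba+2$, the coupling term $-t^{\al+\ba+2}\Iom|u|^{\al+1}|v|^{\ba+1}$ dominates the two norm terms, provided $\Iom|u|^{\al+1}|v|^{\ba+1}>0$, whence $\psi(t)\to-\infty$. Near $t=0^+$, because $p_1,p_2>1$ and $\al+\ba+2>1$, all the power terms are negligible against the linear one, so $\psi(t)=-t\,C(u,v)+o(t)$; thus $\psi(t)>0$ for small $t>0$ as soon as $C(u,v)<0$. Under these two conditions $\psi$ changes sign on $(0,\infty)$, giving the desired $t_0$ with $\psi(t_0)=0$ and hence $(t_0u,t_0v)\in\Theta$.

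It therefore remains to produce a single admissible pair, that is, $(u,v)\in X\setminus\{0\}$ with $\Iom|u|^{\al+1}|v|^{\ba+1}\,dx>0$ and $C(u,v)<0$. Here I would use that $f_1\nequiv0$: there is $u\in X_{0,s_1,p_1}(\Om)$ with $\<f_1,u\>_1\neq0$, and after replacing $u$ by $-u$ if necessary I may assume $\<f_1,u\>_1<0$. Since $\{u\neq0\}$ has positive measure, I pick $v\in C_c^\infty(\Om)$ supported in a ball on which $u\neq0$ on a set of positive measure, so that the nonnegative integrand $|u|^{\al+1}|v|^{\ba+1}$ is strictly positive on a positive-measure set and the coupling integral is positive. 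Finally, rescaling $v\mapsto\eps v$ with $\eps>0$ small keeps the coupling integral positive while making $(\ba+1)\<f_2,\eps v\>_2$ arbitrarily small, so that $C(u,\eps v)\to(\al+1)\<f_1,u\>_1<0$; choosing $\eps$ small enough secures $C(u,\eps v)<0$.

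The genuinely substantive ingredient is the growth hypothesis $\textit(H_2)$, which is exactly what forces the fiber map to be eventually decreasing and thus to cross zero; the only mildly delicate point is arranging the coupling positivity and the forcing sign simultaneously, and this is handled by the explicit choice of $(u,\eps v)$ above using $f_1\nequiv0$. Once these are in place, continuity of $\psi$ and the intermediate value theorem complete the proof that $\Theta\neq\emptyset$.
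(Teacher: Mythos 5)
Your argument is correct, but it takes a genuinely different route from the paper. The paper's proof is essentially a one-liner: it takes $u_{f_1}\in X_{0,s_1,p_1}(\Om)$ to be the (unique) solution of $(-\De)^{s_1}_{p_1}u=f_1$ in $\Om$, $u=0$ in $\Rn\setminus\Om$, and observes that for the pair $(u_{f_1},0)$ the coupling term vanishes identically, so that $\langle\mathcal{I}'(u_{f_1},0),(u_{f_1},0)\rangle=(\al+1)\bigl([u_{f_1}]_{s_1,p_1}^{p_1}-\langle f_1,u_{f_1}\rangle_1\bigr)=0$ by the weak formulation tested against $u_{f_1}$ itself; hence $(u_{f_1},0)\in\Theta$. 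That argument needs only $f_1\nequiv 0$ and no structural hypotheses. Your fibering-map argument instead produces a point of $\Theta$ by an intermediate value theorem on $\psi(t)=\langle\mathcal{I}'(tu,tv),(tu,tv)\rangle$, and it is sound: the finiteness of the coupling integral follows from H\"older together with $(H_1)$, the sign at infinity uses $(H_2)$ together with a positive coupling integral, the sign near $0$ uses $p_1,p_2>1$ and $\al+\ba+2>1$, and your explicit construction of $(u,\eps v)$ with $C(u,\eps v)<0$ and positive coupling is fine (the rescaling keeps the coupling positive since $\ba+1>0$). What your approach buys is an element of $\Theta$ with \emph{both} components nontrivial and nonvanishing interaction term, which is closer in spirit to the Nehari-manifold philosophy; what it costs is reliance on $(H_1)$, $(H_2)$ and a longer construction, whereas the paper's semi-trivial pair $(u_{f_1},0)$ settles nonemptiness immediately. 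Note also that the paper's later lemmas (e.g.\ Lemma \ref{lem-4}) actually use these semi-trivial elements $(u_{f_1},0)$ and $(0,u_{f_2})$ of $\Theta$ for the upper bound on $m$, so the paper's choice is not incidental.
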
	
	\begin{proof}
		Let $u_{f_1} \in X_{0,s_1,p_1}(\Om)$ be the unique non-trivial solution of the problem
		\begin{equation*}
		\begin{cases}
		\begin{aligned}
		(-\De)^{s_1}_{p_1}u&=f_1\,\text{ in }\, \Om,\\
		u&=0\,\text{ in }\, \Rn \setminus \Om.
		\end{aligned}
		\end{cases}
		\end{equation*}
Then, $[u]^{p_1}_{0,s_1,p_1}=\<f_1, u_{f_1}\>_1$. Therefore, by the definition of $\mathcal{I}(\cdot,\cdot)$ we have
\begin{equation*}
\<\mathcal{I}'(u_{f_1},0), (u_{f_1},0)\>=(\al+1)[u_{f_1}]_{s_1,p_1}^{p_1}-(\al+1)\<f_1,u_{f_1}\>_1=0.
\end{equation*}		
Hence, $(u_{f_1},0)$ is non-trivial and $(u_{f_1},0) \in \Theta$.		
	\end{proof}	
	
\begin{lemma}\label{lem-2}
	Let $f_i \in X^*_{0,s_i,p_i}$ and $u_{f_i}$ for $i=1,2$ be the  non solution of
		\begin{equation}\label{eq:pblm}
	\begin{cases}
	\begin{aligned}
	(-\De)^{s_i}_{p_i}u&=f_i\,\text{ in }\, \Om,\\
	u&=0\,\text{ in }\, \Rn \setminus \Om.
	\end{aligned}
	\end{cases}
	\end{equation}
Then, we have
$$[u_{f_i}]_{s_i,p_i}^{p_i}=\|f_i\|_{X^*_{0,s_i,p_i}}^{p'_i}$$
where $p_i'=\frac{p_i}{p_i-1}$ is the conjugate of $p_i$.
\end{lemma}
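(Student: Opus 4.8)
The plan is to exploit the fact that, on the uniformly convex reflexive space $X_{0,s_i,p_i}(\Om)$, the operator $(-\De)^{s_i}_{p_i}$ behaves exactly like the $p_i$-duality map, so that solving $(-\De)^{s_i}_{p_i}u_{f_i}=f_i$ simultaneously pins down the energy $[u_{f_i}]^{p_i}_{s_i,p_i}$ and the dual norm $\|f_i\|_{X^*_{0,s_i,p_i}}$ in terms of one and the same quantity, namely $[u_{f_i}]^{p_i-1}_{s_i,p_i}$.

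First I would write down the weak formulation of \eqref{eq:pblm}: the function $u_{f_i}$ is characterised by
\begin{equation*}
\int_Q \frac{|u_{f_i}(x)-u_{f_i}(y)|^{p_i-2}(u_{f_i}(x)-u_{f_i}(y))(\varphi(x)-\varphi(y))}{|x-y|^{N+s_ip_i}}\,dx\,dy = \langle f_i,\varphi\rangle_i
\end{equation*}
for every test function $\varphi\in X_{0,s_i,p_i}(\Om)$. Choosing $\varphi=u_{f_i}$ yields the energy identity $[u_{f_i}]^{p_i}_{s_i,p_i}=\langle f_i,u_{f_i}\rangle_i$, the same relation already used in the proof of Lemma \ref{lem-1}. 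Note that $u_{f_i}\nequiv 0$ since $f_i\nequiv 0$, so $[u_{f_i}]_{s_i,p_i}>0$ and later divisions are legitimate.

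The core of the argument is the claim $\|f_i\|_{X^*_{0,s_i,p_i}}=[u_{f_i}]^{p_i-1}_{s_i,p_i}$. For the upper estimate I would apply H\"older's inequality on $Q$ against the singular measure $d\mu=|x-y|^{-(N+s_ip_i)}\,dx\,dy$ with conjugate exponents $p_i'$ and $p_i$ to the left-hand side above; since $(p_i-1)p_i'=p_i$, the first H\"older factor is $[u_{f_i}]^{p_i-1}_{s_i,p_i}$, giving $|\langle f_i,\varphi\rangle_i|\le [u_{f_i}]^{p_i-1}_{s_i,p_i}\,[\varphi]_{s_i,p_i}$ for all $\varphi$, and hence $\|f_i\|_{X^*_{0,s_i,p_i}}\le [u_{f_i}]^{p_i-1}_{s_i,p_i}$. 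For the reverse inequality I would test the supremum defining the dual norm against the normalised competitor $v=u_{f_i}/[u_{f_i}]_{s_i,p_i}$, for which $[v]_{s_i,p_i}=1$ and, by the energy identity, $\langle f_i,v\rangle_i=[u_{f_i}]^{p_i}_{s_i,p_i}/[u_{f_i}]_{s_i,p_i}=[u_{f_i}]^{p_i-1}_{s_i,p_i}$; thus the supremum is attained and equals $[u_{f_i}]^{p_i-1}_{s_i,p_i}$.

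Raising to the power $p_i'$ and using $(p_i-1)p_i'=p_i$ once more gives $\|f_i\|^{p_i'}_{X^*_{0,s_i,p_i}}=[u_{f_i}]^{(p_i-1)p_i'}_{s_i,p_i}=[u_{f_i}]^{p_i}_{s_i,p_i}$, which is the asserted identity. The one \emph{delicate} point is the sharpness of the H\"older step: the upper bound alone is insufficient, so I must verify that equality is genuinely realised, and this is precisely why the renormalised solution itself is used as the extremal test function, the two H\"older factors being proportional because both are built from the same differences $u_{f_i}(x)-u_{f_i}(y)$. Everything else is routine bookkeeping with the conjugate exponents.
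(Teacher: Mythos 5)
Your proof is correct and follows essentially the same route as the paper's: H\"older's inequality applied to the weak formulation gives $\|f_i\|_{X^*_{0,s_i,p_i}}\le [u_{f_i}]_{s_i,p_i}^{p_i-1}$, and testing with (a normalisation of) $u_{f_i}$ itself gives the reverse bound via the energy identity. The only cosmetic difference is that you phrase the second step as exhibiting an extremal competitor in the supremum defining the dual norm, while the paper simply bounds $\langle f_i,u_{f_i}\rangle_i\le \|f_i\|_{X^*_{0,s_i,p_i}}[u_{f_i}]_{s_i,p_i}$; these are the same estimate.
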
		
\begin{proof}
Recall $u_{f_i}$ be the unique solution of (\ref{eq:pblm}) which gives that  for $\Phi_i \in X_{0,s_i,p_i}(\Om) \setminus \{0\}$,
\begin{equation}\label{eq:*1}
\int_{\R^{2N}}\frac{|u_{f_i}(x)-u_{f_i}(y)|^{p_i-2} (u_{f_i(x)}-u_{f_i(y)})(\Phi_i(x)-\Phi_i(y)) }{|x-y|^{N+s_ip_i}}\,dxdy=\<f_i,\Phi_i\>_i.
\end{equation}
Using H\"older's inequality, we note that
\begin{equation*}
\begin{aligned}
\<f_i,\Phi_i\>_i &\leq \int_{\R^{2N}}\frac{| u_{f_i}(x)-u_{f_i}(y)|^{p_i-1} |\Phi_i(x)-\Phi_i(y)|}{|x-y|^{N+s_i p_i}}\,dxdy\\
&\leq\left( \int_{\R^{2N}}\frac{| u_{f_i}(x)-u_{f_i}(y)|^{p_i}}{|x-y|^{N+s_i p_i}}\,dxdy\right)^{\frac{p_i-1}{p_i}}
\left(\int_{\R^{2N}}\frac{ |\Phi_i(x)-\Phi_i(y)|^{p_i}  }{|x-y|^{N+s_i p_i}}\,dxdy \right)^{\frac{1}{p_i}}\\
&=[u_{f_i}]_{ s_i,p_i}^{p_i-1}[\Phi_i]_{s_i,p_i}.
\end{aligned}
\end{equation*}
This implies,
\begin{gather*}
\Bigg| \frac{\<f_i,\Phi_i\>_i }{[\Phi]_{s_i,p_i}}\Bigg| \leq [u_{f_i}]_{s_i,p_i}^{p_i-1},
\end{gather*}
which immediately yields us
\begin{gather}\label{eq:*2}
\|f_i\|_{X^*_{0,s_i,p_i}}^{p'_i} \leq [u_{f_i}]_{s_i,p_i}^{p_i}.
\end{gather}
 On the other hand, taking $\Phi_i=u_{f_i}$ in (\ref{eq:*1}) we get,
\begin{gather}\label{eq:*3}
[u_{f_i}]_{s_i,p_i}^{p_i}\leq \|f_i\|^{p'_i}_{X^*_{0,s_i,p_i}}.
\end{gather}
Combining (\ref{eq:*2}) and (\ref{eq:*3}) we come by the following conclusion,
\begin{gather*}
[u_{f_i}]_{s_i,p_i}^{p_i}=\|f_i\|_{X^*_{0, s_i, p_i}}^{p'_i}.
\end{gather*}
 This winds-up the proof of the Lemma.
\end{proof}	
The subsequent result is an easy consequence of the Ekeland Variational Principle.
\begin{lemma}\label{lem-3}
	Let $\al,\ba$ satisfy $\al+\ba+2> \max\{p_1, p_2\}$. Then there exists a sequence $\{(u_k,v_k) \}_{k \in \N}$
such that 
\begin{gather}\label{eq:I}
\inf_{(u,v) \in \Theta} \mathcal{I}(u,v) < \mathcal{I}(u_k,v_k) <\inf_{(u,v) \in \Theta} \mathcal{I}(u,v)+\frac{1}{k},
\end{gather}
\begin{gather}\lab{eq:I'}
\|\mathcal{I}'_{|_{\Theta}}(u_k,v_k)\|_{X^*} \leq \frac{1}{k} \,\text{ for all }\, k \in \N.
\end{gather}
\end{lemma}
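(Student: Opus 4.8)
The plan is to apply the Ekeland Variational Principle to the functional $\mathcal{I}$ restricted to $\Theta$. By Lemma~\ref{lem-1} we already know $\Theta \neq \emptyset$, so it suffices to verify the three standard hypotheses of Ekeland's principle: that $\Theta$, equipped with the metric inherited from $\|\cdot\|$, is a complete metric space; that $\mathcal{I}|_\Theta$ is bounded from below; and that $\mathcal{I}|_\Theta$ is lower semicontinuous. Granting these, an application of Ekeland's principle with $\eps = 1/k$ yields, for each $k \in \N$, a point $(u_k,v_k) \in \Theta$ satisfying \eqref{eq:I} together with the metric inequality
\begin{equation*}
\mathcal{I}(w) \ge \mathcal{I}(u_k,v_k) - \frac{1}{k}\,\|w-(u_k,v_k)\| \quad \text{for every } w \in \Theta,
\end{equation*}
and \eqref{eq:I'} is then obtained by rewriting this inequality in terms of the restricted differential.

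The heart of the argument is the boundedness from below, and here the hypothesis $\max\{p_1,p_2\} < \al+\ba+2$ is decisive. Indeed, from the reduced expression for $\mathcal{I}|_\Theta$ displayed after \eqref{Theta}, this hypothesis forces $a(p_1) = \frac{1}{p_1}-\frac{1}{\al+\ba+2} > 0$ and $a(p_2) > 0$ (as well as $a(1) > 0$), so the two leading terms $(\al+1)a(p_1)[u]^{p_1}_{s_1,p_1}$ and $(\ba+1)a(p_2)[v]^{p_2}_{s_2,p_2}$ are nonnegative. The linear terms are controlled by the duality pairing followed by Young's inequality: for each $i$ and every $\de > 0$,
\begin{equation*}
|\<f_i,u\>_i| \le \|f_i\|_{X^*_{0,s_i,p_i}}\,[u]_{s_i,p_i} \le \frac{\de}{p_i}\,[u]^{p_i}_{s_i,p_i} + C_\de\,\|f_i\|^{p_i'}_{X^*_{0,s_i,p_i}}.
\end{equation*}
Choosing $\de$ small enough that the surviving coefficients of $[u]^{p_1}_{s_1,p_1}$ and $[v]^{p_2}_{s_2,p_2}$ remain strictly positive shows that $\mathcal{I}|_\Theta$ is in fact coercive on $\Theta$, and in particular bounded below.

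For the remaining two hypotheses I would argue as follows. The map $(u,v)\mapsto \<\mathcal{I}'(u,v),(u,v)\>$ is continuous on $X$, so $\Theta$ is the intersection of its zero set with $X\setminus\{0\}$; a standard argument using the Sobolev embedding shows that elements of $\Theta$ stay bounded away from $0$ in norm, whence $\Theta$ is closed in $X$ and therefore complete for the induced metric. Lower semicontinuity of $\mathcal{I}|_\Theta$ is immediate from the continuity of $\mathcal{I}$ on $X$. Finally, to convert the metric estimate above into \eqref{eq:I'}, I would realize $\Theta$ as a regular level set of the $C^1$ constraint functional $J(u,v):=\<\mathcal{I}'(u,v),(u,v)\>$; the computation that $J'(u,v)\neq 0$ on $\Theta$ --- again using $p_i < \al+\ba+2$ to see that the dominant homogeneous terms do not cancel --- endows $\Theta$ with a $C^1$-manifold structure, so that $\mathcal{I}'|_\Theta$ is well defined and the metric inequality passes to the bound $\|\mathcal{I}'|_\Theta(u_k,v_k)\|_{X^*}\le 1/k$.

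I expect the main obstacle to be the coercivity estimate of the second paragraph together with the non-degeneracy $J'\neq 0$: one must ensure that the $f_i$-dependent linear terms are genuinely absorbed by the positive leading terms uniformly over $\Theta$, and that $\Theta$ stays bounded away from the origin so that the restricted gradient appearing in \eqref{eq:I'} is meaningful. Once these are in place, the rest is the routine machinery of Ekeland's principle on a complete $C^1$ constraint manifold.
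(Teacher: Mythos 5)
Your proposal takes essentially the same route as the paper: on $\Theta$ one uses the reduced expression for $\mathcal{I}|_\Theta$, the hypothesis $\max\{p_1,p_2\}<\al+\ba+2$ to get $a(p_1),a(p_2)>0$, and H\"older plus Young to absorb the terms $\<f_i,\cdot\>_i$, concluding that $\mathcal{I}|_\Theta$ is bounded below (and coercive); Ekeland's variational principle then produces the sequence. The paper's proof is exactly this estimate (with a typo writing $\|\mathcal{I}'_{|_\Theta}(u,v)\|$ where the functional value $\mathcal{I}_{|_\Theta}(u,v)$ is meant) followed by a one-line appeal to Ekeland, so you are in fact more explicit than the paper about the remaining hypotheses of the principle.

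One auxiliary claim of yours is not correct as stated: in this non-homogeneous setting the elements of $\Theta$ do \emph{not} stay bounded away from the origin. Near $(0,0)$ the constraint is governed by the balance between $[u]_{s_1,p_1}^{p_1}$ and the linear term $\<f_1,u\>_1$ rather than by a Sobolev inequality; concretely, for any $u_0$ with $[u_0]_{s_1,p_1}=1$ and $\<f_1,u_0\>_1>0$ one checks that $(tu_0,0)\in\Theta$ for $t=\<f_1,u_0\>_1^{1/(p_1-1)}$, and this $t$ can be made arbitrarily small by choosing $u_0$ nearly annihilated by $f_1$. This does not sink the argument: since $\Theta=\mathcal{K}^{-1}(0)\setminus\{(0,0)\}$ with $\mathcal{K}$ continuous, one has $\overline{\Theta}\subseteq\Theta\cup\{(0,0)\}$, so one may apply Ekeland on the complete set $\overline{\Theta}$ and observe that the points produced satisfy $\mathcal{I}(u_k,v_k)<m+\tfrac1k<0=\mathcal{I}(0,0)$ (using Lemma \ref{lem-4}), hence lie in $\Theta$. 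With that repair your proof is sound and coincides in substance with the paper's.
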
		
\begin{proof}
Using H\"older's inequality and Young's inequality we get,
\begin{equation*}
\begin{aligned}
\|\mathcal{I}'_{|_{\Theta}}(u,v) \| &\geq (\al+1)a(p_1)[u]_{s_1,p_1}^{p_1}+(\ba+1)a(p_2)[v]_{s_2,p_2}^{p_2}\\
&\qquad-(\al+1)\theta_1^{p_1}[u]_{s_1,p_1}^{p_1}-(\al+1)[a(1){\|f_1\|_{X^*_{0,s_1,p_1}}^{p_1}} ]^{p'_1}\theta_1^{-p'_1}\\
&\qquad -(\ba+1)\theta_2^{p_2}[v]_{s_2,p_2}^{p_2}-(\ba+1)[a(1){\|f_2\|_{X^*_{0,s_2,p_2}}}]^{p'_2}\\
&\geq (\al+1) [u]_{s_1,p_1}^{p_1}\left(a(p_1) -\theta_1^{p_1}\right) +(\ba+1) [v]_{s_2,p_2}^{p_2}\left(a(p_2) -\theta_2^{p_2}\right)\\
&\qquad-\theta_1^{-p'_1}(\al+1) \left(a(1) \|f_1\|_{X^*_{0,s_1,p_1}} \right)^{p'_1}-\theta_2^{-p'_2}(\ba+1) \left(a(1) \|f_2\|_{X^*_{0,s_2,p_2}} \right)^{p'_2}\\
& \geq -\theta_1^{-p'_1}(\al+1) \left(a(1) \|f_1\|_{X^*_{0,s_1,p_1}} \right)^{p'_1}-\theta_2^{-p'_2}(\ba+1) \left(a(1) \|f_2\|_{X^*_{0,s_2,p_2}} \right)^{p'_2}
\end{aligned}
\end{equation*}	
by choosing $\theta_1, \theta_2>0$ small enough. Thus $\mathcal{I}(\cdot,\cdot)$ is bounded below on $\Theta$. Now applying Ekeland Variational principle, we assert that there exists a sequence $\{(u_k,v_k) \}_{k \in \N} \subset \Theta$ such that (\ref{eq:I}) and (\ref{eq:I'}) hold. This concludes the proof of the Lemma.
\end{proof}		
\subsection{Few technical lemmas}
In this section, we are dealing with some technical lemmas in order to obtain our main result.
Let us denote 
 $$ m:=\inf_{(u,v) \in \Theta} \mathcal{I}(u,v).$$
\begin{lemma}\label{lem-4}
It holds that
\begin{gather*}
m<\min \left\{ -\frac{\al+1}{p'_1} \|f_1\|^{p'_1}_{X^*_{0,s_1,p_1}},\, -\frac{\ba+1}{p'_2} \|f_2\|^{p'_2}_{X^*_{0,s_2,p_2}} \right\}.
\end{gather*}
\end{lemma}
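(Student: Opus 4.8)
The plan is to locate on $\Theta$ the two ``decoupled'' points furnished by Lemmas \ref{lem-1} and \ref{lem-2}, compute their energy exactly, and then show that each can be strictly undercut by switching on the coupling term. Set $V_1:=-\frac{\al+1}{p'_1}\|f_1\|^{p'_1}_{X^*_{0,s_1,p_1}}$ and $V_2:=-\frac{\ba+1}{p'_2}\|f_2\|^{p'_2}_{X^*_{0,s_2,p_2}}$; since the claim is $m<\min\{V_1,V_2\}$, it suffices to prove $m<V_1$ and $m<V_2$ separately.

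First I would evaluate $\mathcal{I}$ at $(u_{f_1},0)\in\Theta$. Feeding $[u_{f_1}]^{p_1}_{s_1,p_1}=\<f_1,u_{f_1}\>_1=\|f_1\|^{p'_1}_{X^*_{0,s_1,p_1}}$ (Lemma \ref{lem-2}) into the restricted functional $\mathcal{I}_{|\Theta}$ and using the identity $a(p_1)-a(1)=\frac1{p_1}-1=-\frac1{p'_1}$ gives $\mathcal{I}(u_{f_1},0)=V_1$ exactly; symmetrically $\mathcal{I}(0,u_{f_2})=V_2$. This already yields $m\le\min\{V_1,V_2\}$, so the whole point of the lemma is to turn these into strict inequalities.

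To obtain $m<V_1$ I would perturb $(u_{f_1},0)$ in the second slot. Fix $\phi\in X_{0,s_2,p_2}(\Om)\setminus\{0\}$ and look at the curve $\gamma(t)=\big(\sigma(t)u_{f_1},\,t\phi\big)$, where $\sigma(t)$ is chosen so that $\gamma(t)\in\Theta$ and $\sigma(0)=1$. The constraint $\<\mathcal{I}'(\gamma(t)),\gamma(t)\>=0$ is solvable for $\sigma(t)$ near $1$ by monotonicity in $\sigma$ --- its $\sigma$-derivative equals $(\al+1)(p_1-1)\|f_1\|^{p'_1}_{X^*_{0,s_1,p_1}}>0$ at $(\sigma,t)=(1,0)$ --- together with the intermediate value theorem; note $(H_1)$ makes the coupling integral $\Iom|u_{f_1}|^{\al+1}|\phi|^{\ba+1}\,dx$ finite through H\"older. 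Substituting $\gamma(t)$ into $\mathcal{I}_{|\Theta}$ and expanding, the adjustment $\sigma(t)-1$ combines with the explicit terms so that the leading part of $\mathcal{I}(\gamma(t))-V_1$ reduces to $-\,\Iom|u_{f_1}|^{\al+1}|\phi|^{\ba+1}\,dx\;t^{\ba+1}-(\ba+1)\<f_2,\phi\>_2\,t$, up to the higher-order term $\frac{\ba+1}{p_2}[\phi]^{p_2}_{s_2,p_2}\,t^{p_2}$. Choosing $\phi$ so that the coefficient of the dominant power is strictly positive --- the coupling integral when $\ba<0$, the pairing $\<f_2,\phi\>_2$ when $\ba>0$, and their sum when $\ba=0$, all achievable since $u_{f_1}\nequiv0$ and $f_2\nequiv0$ --- makes $\mathcal{I}(\gamma(t))<V_1$ for small $t>0$, hence $m<V_1$. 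The estimate $m<V_2$ is identical after exchanging the two components.

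The delicate point is exactly this expansion. When $\ba\in(-1,0)$ one has $\ba+1\in(0,1)$, so $t\mapsto t^{\ba+1}$ fails to be differentiable at $t=0$, the implicit function theorem is unavailable, and $\sigma(t)-1$ is of the non-smooth order $t^{\ba+1}$ rather than $t$. One must therefore track the three competing scales $t^{\ba+1}$, $t$ and $t^{p_2}$, decide which dominates in each of the regimes $\ba<0$, $\ba=0$, $\ba>0$ (and symmetrically in $\al$ for $V_2$), and verify that the surviving coefficient is negative. Everything else is routine bookkeeping with the constants $a(\cdot)$.
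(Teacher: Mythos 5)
Your proposal is correct, and it is worth recording that it does strictly more than the paper's own proof. The paper's argument consists only of your first step: it evaluates $\mathcal{I}(u_{f_1},0)=-\frac{\al+1}{p'_1}\|f_1\|^{p'_1}_{X^*_{0,s_1,p_1}}$ and $\mathcal{I}(0,u_{f_2})=-\frac{\ba+1}{p'_2}\|f_2\|^{p'_2}_{X^*_{0,s_2,p_2}}$ via Lemma \ref{lem-2}, and then concludes with the bare assertion ``since $m<\mathcal{I}(u_{f_1},0)$ and $m<\mathcal{I}(0,u_{f_2})$,'' which the definition of the infimum does not supply --- it only gives $m\le\min\{V_1,V_2\}$. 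You correctly identify this as the actual content of the lemma and supply the missing argument: perturb $(u_{f_1},0)$ along a curve $\gamma(t)=(\sigma(t)u_{f_1},t\phi)$ kept on $\Theta$, use that the $\sigma$-derivative of the constraint at $(1,0)$ equals $(\al+1)(p_1-1)\|f_1\|^{p'_1}_{X^*_{0,s_1,p_1}}>0$ to solve for $\sigma(t)$ near $1$, and observe that the linear-in-$(\sigma-1)$ terms cancel in $\mathcal{I}(\gamma(t))-V_1$, leaving $-t^{\ba+1}\Iom|u_{f_1}|^{\al+1}|\phi|^{\ba+1}\,dx-(\ba+1)t\<f_2,\phi\>_2$ plus higher-order corrections of size $(\sigma(t)-1)^2+t^{p_2}$. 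Your sign choices for $\phi$ in the three regimes $\ba<0$, $\ba=0$, $\ba>0$ (using $u_{f_1}\nequiv 0$, $f_2\nequiv 0$, and replacing $\phi$ by $-\phi$ where needed, which leaves the coupling integral unchanged) do make the dominant coefficient strictly negative, and the non-differentiability of $t\mapsto t^{\ba+1}$ at $0$ that you flag is handled correctly by tracking orders rather than invoking the implicit function theorem. In short: same decoupled test points as the paper, plus a genuine proof of strictness that the paper omits; the only thing left implicit on your side is the routine verification that $(\sigma(t)-1)^2=O(t^{2\min\{\ba+1,1\}})$ is indeed subordinate to the leading term, which you have already set up.
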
		
	\begin{proof}
	Let $u_{f_i}$ be unique solution of (\ref{eq:pblm}) given in Lemma  \ref{lem-2}. Then, by Lemma \ref{lem-2} we have
	\begin{equation*}
	\begin{aligned}
	\mathcal{I}(u_{f_1},0)&=(\al+1) \left( \frac{1}{p_1}[u_{f_1}]_{s_1,p_1}^{p_1}-\<f_1, u_{f_1}\>_1\right)=-\frac{(\al+1)}{p'_1}\|f_1\|_{X^*_{0,s_1,p_1}}^{p'_1}.
	\end{aligned}
	\end{equation*}
	In a similar fashion, we have
			$$\mathcal{I}(0,u_{f_2})=-\frac{(\ba+1)}{p'_2}\|f_2\|_{X^*_{0,s_2,p_2}}^{p'_2}.$$
		Since $m< \mathcal{I}(u_{f_1},0)$ and $m<\mathcal{I}(0, u_{f_2})$, so we accomplish the result.
	\end{proof}	
	
Let us define $\mathcal{K}: X \to \R$ defined by
\begin{equation}\label{eq:K}
\begin{aligned}
\mathcal{K}(u,v)&=\< \mathcal{I}'(u,v), (u,v)\>\\
&=(\al+1) [u]_{s_1,p_1}^{p_1}+(\ba+1)[v]_{s_2,p_2}-(\al+\ba+2)\Iom |u|^{\al+1}|v|^{\ba+1}\,dx\\
&\qquad-(\al+1)\<f_1,u\>_1-(\ba+1)\<f_2,v\>_2,
\end{aligned}
\end{equation}
for $(u,v) \in X$. Following result ensures that every critical point of $\mathcal K$ is non trivial.

\begin{lemma}\label{lem-5}
	Let $\mathcal{K}$ be defined in (\ref{eq:K}) above. Then, $\mathcal{K}'(u,v)=0$ for $(u,v) \in \Theta$ implies that $u \not\equiv0 \not\equiv v$.
\end{lemma}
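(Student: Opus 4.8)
The plan is to argue by contradiction, exploiting the positive homogeneity that survives when one coordinate is switched off. Suppose the conclusion fails; by the symmetry of the two coordinates it suffices to treat the case $v\equiv 0$, the case $u\equiv 0$ being identical after interchanging the roles of the indices $1$ and $2$. Since $(u,v)\in\Theta\subset X\setminus\{0\}$, the vanishing of $v$ forces $u\not\equiv 0$. I would then extract two scalar identities in $u$ alone: the Nehari constraint $\mathcal{K}(u,0)=0$ coming from membership in $\Theta$, and the relation $\<\mathcal{K}'(u,0),(u,0)\>=0$ coming from the hypothesis $\mathcal{K}'(u,v)=0$.

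The key point is that with $v\equiv 0$ the coupling integral $\Iom |u|^{\al+1}|v|^{\ba+1}\,dx$ vanishes identically (here $\ba+1>0$ is used), as do $\<f_2,v\>_2$ and $[v]_{s_2,p_2}^{p_2}$. Hence the constraint collapses to $(\al+1)\big([u]_{s_1,p_1}^{p_1}-\<f_1,u\>_1\big)=0$, giving $[u]_{s_1,p_1}^{p_1}=\<f_1,u\>_1$. To obtain the second identity I would evaluate $\<\mathcal{K}'(u,0),(u,0)\>$ through the one-parameter family $t\mapsto \mathcal{K}((1+t)u,0)$, whose coupling term is identically zero; differentiating $(\al+1)(1+t)^{p_1}[u]_{s_1,p_1}^{p_1}-(\al+1)(1+t)\<f_1,u\>_1$ at $t=0$ yields $p_1[u]_{s_1,p_1}^{p_1}=\<f_1,u\>_1$. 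Combining the two identities gives $(p_1-1)[u]_{s_1,p_1}^{p_1}=0$, and since $p_1>1$ I conclude $[u]_{s_1,p_1}^{p_1}=0$, i.e. $u\equiv 0$, contradicting $u\not\equiv 0$.

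The only genuinely delicate point, and the reason I would route the computation through the scaling $t\mapsto\mathcal{K}((1+t)u,0)$ rather than through the full partial derivative in the $v$-variable, is that for $\al,\ba\in(-1,0]$ the functional $\mathcal{K}$ need not be differentiable in the direction of the vanishing coordinate; pairing $\mathcal{K}'$ only against $(u,v)$ sidesteps this, since the surviving coordinate enters through the positively homogeneous terms $[u]_{s_1,p_1}^{p_1}$ and $\<f_1,u\>_1$ while the cross term is annihilated by $v\equiv 0$. I would close with the symmetric argument for $u\equiv 0$, now invoking $p_2>1$. I note that this route does not even require $f_i\nequiv 0$; alternatively, when $\al,\ba>0$ one may compute the partial derivative directly, obtaining $\mathcal{K}'_v(u,0)=-(\ba+1)f_2$, and contradict the standing assumption $f_2\nequiv 0$.
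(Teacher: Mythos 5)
Your proof is correct and follows essentially the same route as the paper's: assume by contradiction that one coordinate vanishes, extract the two identities $[u]_{s_1,p_1}^{p_1}=\<f_1,u\>_1$ from $\mathcal{K}(u,0)=0$ and $p_1[u]_{s_1,p_1}^{p_1}=\<f_1,u\>_1$ from $\<\mathcal{K}'(u,0),(u,0)\>=0$, and subtract to force $u\equiv 0$, contradicting $(u,v)\neq(0,0)$. Your additional care in evaluating $\<\mathcal{K}'(u,0),(u,0)\>$ via the homogeneous family $t\mapsto\mathcal{K}((1+t)u,0)$ is a reasonable refinement but does not change the substance of the argument.
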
	

\begin{proof}
We will use the method by contradiction for proof. So we assume $(u_1,v_1) \in \Theta$ be such that $\mathcal{K'}(u_1,v_1)=0$ and either $u_1\equiv 0$ or $v_1\equiv 0$. Then $(u_1, v_1) \in \Theta$ implies that $(u_1,v_1) \neq (0,0)$, by definition of $\Theta$. Without loss of generality, let $v_1\equiv 0$. As $(u_1, v_1) \in \Theta$, so
$\< \mathcal{I}'(u_1,v_1), (u_1,v_1)\>=0$. This implies $\mathcal{K}(u_1, v_1)=0$ which asserts that 
\begin{gather}\label{eq:a}
(\al+1)[u_1]_{s_1,p_1}^{p_1}-(\al+1)\<f_1,u_1\>_1=0
\end{gather}
since $v_1\equiv 0$.
Again as $\mathcal{K'}(u_1,v_1)=0$, so we get $\< \mathcal{K'}(u_1,v_1), (u_1,v_1)\>=0$ which yields us
\begin{gather}\label{eq:b}
p_1(\al+1)[u_1]_{s_1,p_1}^{p_1}-(\al+1)\<f_1,u_1\>_1=0.
\end{gather}
Equations (\ref{eq:a}) and (\ref{eq:b}) together imply
\begin{gather*}
(p_1-1)(\al+1)[u_1]_{s_1,p_1}^{p_1}=0,
\end{gather*}
that is $u_1\equiv 0$ a.e. in $\Om$. Hence, $(u_1,v_1)=(0,0) \in \Theta$ which is a contradiction to the definition of $\Theta$. This completes the proof.
\end{proof}	
The next outcome is an important consequence of Lemma \ref{lem-5}.
\begin{lemma}\label{lem-6}
	Let $(f_1, f_2) \in X^* \setminus \{ (0,0)\}$ be such that
	\begin{gather}\label{eq:E}
	 0< \|f_1\|_{X^*_{0,s_1,p_1}}+\|f_2\|_{X^*_{0,s_2,p_2}}<\min\{\eps_1, \eps_2,1 \}.
	\end{gather}
	Then, $\mathcal{K'}(u,v)\neq 0$ for every $(u,v) \in \Theta$.
\end{lemma}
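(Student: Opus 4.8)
The plan is to argue by contradiction. Suppose some $(u,v)\in\Theta$ satisfies $\mathcal{K}'(u,v)=0$. By Lemma \ref{lem-5} both components are then nontrivial, so writing $a_1=[u]_{s_1,p_1}^{p_1}$, $a_2=[v]_{s_2,p_2}^{p_2}$ and $I=\Iom|u|^{\al+1}|v|^{\ba+1}\,dx$ we have $a_1,a_2>0$. I would first extract three scalar identities: the membership $(u,v)\in\Theta$ gives $\mathcal{K}(u,v)=0$, while testing $\mathcal{K}'(u,v)=0$ against $(u,0)$ and against $(0,v)$ gives $p_1a_1=(\al+\ba+2)I+\<f_1,u\>_1$ and $p_2a_2=(\al+\ba+2)I+\<f_2,v\>_2$. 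Eliminating $I$ and the duality terms from these three relations yields two clean identities: the ``nonlinearity identity''
$$I=b(\al,p_1)\,a_1+b(\ba,p_2)\,a_2,$$
and, after a second elimination, the ``balance identity''
$$(\al+1)c(p_1)\,a_1+(\ba+1)c(p_2)\,a_2=(\al+1)\<f_1,u\>_1+(\ba+1)\<f_2,v\>_2.$$
It is precisely this computation that motivates the definitions of $b(\cdot,\cdot)$ and $c(\cdot)$ in Section 2; note $c(p_i)>0$ by $(H_2)$.

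Next I would produce a lower bound on the seminorms. Starting from the nonlinearity identity and applying H\"older's inequality with the critical conjugate exponents $\frac{p_1^*}{\al+1},\frac{p_2^*}{\ba+1}$ (conjugate exactly because of $(H_1)$), then Young's inequality and the Sobolev inequalities defining $S_{p_1},S_{p_2}$, I obtain, with $M:=\min\{S_{p_1}^{p_1^*/p_1},S_{p_2}^{p_2^*/p_2}\}$,
$$b(\al,p_1)\,a_1+b(\ba,p_2)\,a_2=I\le M^{-1}\Big(\frac{\al+1}{p_1^*}\,a_1^{p_1^*/p_1}+\frac{\ba+1}{p_2^*}\,a_2^{p_2^*/p_2}\Big).$$
Since the right-hand side is a sum matched term-by-term against the left, at least one of $b(\al,p_1)a_1-M^{-1}\frac{\al+1}{p_1^*}a_1^{p_1^*/p_1}$ or $b(\ba,p_2)a_2-M^{-1}\frac{\ba+1}{p_2^*}a_2^{p_2^*/p_2}$ must be nonpositive. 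Dividing by the strictly positive seminorm and using $\al+1<p_1^*$ (so the factor $\frac{p_1^*}{\al+1}\ge1$ may be dropped) yields at least one of
$$a_1\ge\big(b(\al,p_1)M\big)^{\frac{p_1}{p_1^*-p_1}}\quad\text{or}\quad a_2\ge\big(b(\ba,p_2)M\big)^{\frac{p_2}{p_2^*-p_2}}.$$

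Finally I would close with the balance identity. Writing $\|f_i\|_*$ for $\|f_i\|_{X^*_{0,s_i,p_i}}$ and bounding its right-hand side by $(\al+1)\|f_1\|_*a_1^{1/p_1}+(\ba+1)\|f_2\|_*a_2^{1/p_2}$, I apply Young's inequality with the admissible parameters $\nu,\mu$ to absorb $a_1^{1/p_1},a_2^{1/p_2}$ into $a_1,a_2$, and use $\|f_i\|_*\le\|f_1\|_*+\|f_2\|_*<1$ to replace $\|f_i\|_*^{p_i'}$ by $\|f_1\|_*+\|f_2\|_*$. This gives
$$(\al+1)\Big(c(p_1)-\frac{\nu^{p_1}}{p_1}\Big)a_1+(\ba+1)\Big(c(p_2)-\frac{\mu^{p_2}}{p_2}\Big)a_2\le\frac{\|f_1\|_*+\|f_2\|_*}{d(\nu,\mu)},$$
both coefficients on the left being positive by the chosen ranges of $\nu,\mu$. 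Discarding whichever term I do not control and inserting the lower bound from the previous step, I obtain $\eps_1\le\|f_1\|_*+\|f_2\|_*$ in the first case and $\eps_2\le\|f_1\|_*+\|f_2\|_*$ in the second, each contradicting $(H_3)$. Hence $\mathcal{K}'(u,v)\ne0$ for every $(u,v)\in\Theta$.

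The main obstacle is the lower-bound step: the natural Sobolev estimate couples $a_1$ and $a_2$ through a single product, and the key device is to pass to the summed form via Young's inequality so that the ``at least one bracket is nonpositive'' dichotomy decouples the two seminorms and produces exactly the constants $\big(b(\cdot,\cdot)M\big)^{p_i/(p_i^*-p_i)}$ that appear in $\eps_1,\eps_2$. One should also check that the exponents $\frac{p_i^*-p_i}{p_i}$ are positive and that $\al+1<p_1^*$, $\ba+1<p_2^*$; all three follow from $(H_1)$ together with $p_i^*>p_i$.
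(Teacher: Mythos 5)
Your argument is correct and is essentially the paper's own proof: contradiction via Lemma \ref{lem-5}, the two scalar identities $I=b(\al,p_1)a_1+b(\ba,p_2)a_2$ and $(\al+1)c(p_1)a_1+(\ba+1)c(p_2)a_2=(\al+1)\<f_1,u\>_1+(\ba+1)\<f_2,v\>_2$, the H\"older--Young--Sobolev lower bound on one of the seminorms, and the final Young absorption producing $\eps_1$ or $\eps_2$. The only cosmetic differences are that you realize the dichotomy by noting one of the two brackets must be nonpositive (the paper instead assumes WLOG $[\tilde u]_{s_1,p_1}^{p_1^*}\le[\tilde v]_{s_2,p_2}^{p_2^*}$ and uses $\frac{\al+1}{p_1^*}+\frac{\ba+1}{p_2^*}=1$), and that you make explicit the step $\|f_i\|_*^{p_i'}\le\|f_i\|_*$ coming from $(H_3)$, which the paper leaves implicit.
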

\begin{proof}
	We will prove this result by the method of contradiction. Let us suppose that
	\begin{gather*}
	\mathcal{K}'(\tilde{u}, \tilde{v}) =0 \, \text{ for some }\, (\tilde{u}, \tilde{v}) \in \Theta.
	\end{gather*}
	By Lemma \ref{lem-5}, we know that  $\tilde{u} \not \equiv 0\not \equiv \tilde{v}$. 
Whereas 
$\< 	\mathcal{K}'(\tilde{u}, \tilde{v}), (\tilde{u},\tilde{v}) \> =0
$	implies
\begin{equation}\label{eq:c}
\begin{aligned}
&(\al+1) p_1[\tilde{u}]_{s_1,p_1}^{p_1}+(\ba+1)p_2[\tilde{v}]_{s_2, p_2}^{p_2} \\
&-(\al +\ba +2)^2 \Iom |\tilde{u}|^{\al+1}|\tilde{v}|^{\ba+1}\,dx-(\al+1)\<f_1,\tilde{u}\>_1-(\ba+1)\<f_2, \tilde{v}\>_2=0.
\end{aligned}
\end{equation}
Again as $(\tilde{u}, \tilde{v}) \in \Theta$, so we have $\< \mathcal{I}'(\tilde{u}, \tilde{v}), (\tilde{u}, \tilde{v})\>=0$, that is,
\begin{equation}\label{eq:d}
\begin{aligned}
&(\al+1) [\tilde{u}]_{s_1,p_1}^{p_1}+(\ba+1)[\tilde{v}]_{s_2, p_2}^{p_2} \\
&-(\al +\ba +2) \Iom |\tilde{u}|^{\al+1}|\tilde{v}|^{\ba+1}\,dx-(\al+1)\<f_1, \tilde{u}\>_1-(\ba+1)\<f_2, \tilde{v}\>_2 =0.
\end{aligned}
\end{equation}
Equations (\ref{eq:c}) and (\ref{eq:d}) together yields,
\begin{equation*}
\begin{aligned}
(\al+1) (p_1-1 )[\tilde{u}]_{s_1,p_1}^{p_1}+(\ba+1)(p_2-1)[\tilde{v}]_{s_2, p_2}^{p_2} 
=(\al +\ba +2)(\al+\ba+1) \Iom |\tilde{u}|^{\al+1}|\tilde{v}|^{\ba+1}\,dx.
\end{aligned}
\end{equation*}
Using Young's and Sobolev inequality we have,
\begin{equation*}
\begin{aligned}
&\frac{(\al+1)(p_1-1)}{(\al+\ba+2)(\al+\ba+1)}[ \tilde{u} ]_{s_1, p_1}^{p_1}+\frac{(\ba+1)(p_2-1)}{(\al+\ba+2)(\al+\ba+1)}[ \tilde{v} ]_{s_2, p_2}^{p_2}\\
&=\Iom | \tilde{u}|^{\al+1}|\tilde{v}|^{\ba+1}\,dx \leq \frac{(\al+1)}{p_1^*}\big(\Iom |\tilde{u}|^{p_1^*} \,dx \big)^{\frac{1}{p_1^*}}+\frac{(\ba+1)}{p_2^*}\big(\Iom |\tilde{v}|^{p_2^*} \,dx \big)^{\frac{1}{p_2^*}}\\
&=\frac{(\al+1)}{p_1^*} |\tilde{u}|_{L^{p_1^*}(\Om)}+\frac{(\ba+1)}{p_2^*} |\tilde{v}|_{L^{p_2^*}(\Om)}\\
&\leq \frac{(\al+1)}{p_1^*}\frac{1}{S_{p_1}^{\frac{1}{p_1}}}
 [\tilde{u}]_{s_1,p_1}+\frac{(\ba+1)}{p_2^*}\frac{1}{S_{p_2}^{\frac{1}{p_2}}}
 [\tilde{v}]_{s_2,p_2},
\end{aligned}
\end{equation*}
where we have used 
$$\frac{\al+1}{p_1^*}+\frac{\ba+1}{p_2^*}=1
\,\text{ and }\,
S_{p_i}^{\frac{1}{p_i}}|u|_{L^{p_i}} \leq [u]_{s_i,p_i}.$$ 
Now without loss of generality, we may assume that $$ [\tilde{u}]_{s_1, p_1}^{p_1^*} \leq [ \tilde{v}]_{s_2, p_2}^{p_2^*}.$$
	Hence we get,
	\begin{equation*}
	\begin{aligned}
	b(\al, p_1) [ \tilde{u}]_{s_1,p_1}^{p_1}+b(\ba,p_2) [ \tilde{v}]_{s_2,p_2}^{p_2}
	\leq \frac{(\al+1)}{p_1^*}\frac{1}{S_{p_1}^{p_1^*/p_1} }[ \tilde{u}]_{s_1, p_1}^{p_1^*}+\frac{(\ba+1)}{p_2^*}\frac{1}{S_{p_2}^{p_2^*/p_2} }[ \tilde{v}]_{s_2, p_2}^{p_2^*},
	\end{aligned}
	\end{equation*}
	which implies that 
	\begin{equation*}
	\begin{aligned}
	b(\ba,p_2) [ \tilde{v}]_{s_2,p_2}^{p_2}
	\leq \left(\frac{\al+1}{p_1^*} + \frac{\ba+1}{p_2^*}\right)\frac{1}{\min\{S_{p_1}^{p_1^*/p_1}, S_{p_2}^{p_2^*/p_2}\}}[ \tilde{v}]_{s_2, p_2}^{p_2^*}
		\end{aligned}
	\end{equation*}
	This yields us,
	\begin{equation}\label{eq:M}
	M :=b(\ba, p_2) \big( \min\{S_{p_1}^{p_1^*/p_1} S_{p_2}^{p_2^*/p_2}\}\big)^{\frac{1}{p_2^*-p_2}}\leq [ \tilde{v}]_{s_2,p_2}.
	\end{equation}
 On the other hand, if we have assumed 
	$$ [\tilde{u}]_{s_1, p_1}^{p_1^*} \geq [ \tilde{v}]_{s_2, p_2}^{p_2^*},$$ then we would get
	\begin{equation}\label{eq:M-}
	M \leq [\tilde{u}]_{s_1,p_1}.
	\end{equation}
	Multiplying (\ref{eq:d}) by $(\al+\ba+2)$ and then subtracting from (\ref{eq:c}), we get using H\"older and Young's inequality,
	\begin{equation*}
	\begin{aligned}
	&(\al+1)c(p_1)[\tilde{u}]_{s_1, p_1}^{p_1}+(\ba+1)c(p_2)[\tilde{v}]_{s_2, p_2}^{p_2}\\
	&= (\al+1) \<f_1, \tilde{u}\>+(\ba+1)\<f_2, \tilde{v}\>\\
	&\leq (\al+1)\frac{\de_1^{p_1}}{p_1}[\tilde{u}]_{s_1, p_1}^{p_1}+
	\frac{(\al+1)}{\de_1^{p'_1}p'_1}\|f_1\|_{X^*_{0,s_1,p_1}}^{p'_1}+
	(\ba+1)\frac{\de_2^{p_2}}{p_2}[ \tilde{v}]_{s_2, p_2}^{p_2}+\frac{(\ba+1)}{\de_2^{p'_2}p'_2}\|f_2\|_{X^*_{0,s_2,p_2}}^{p'_2},
	\end{aligned}
	\end{equation*}
	for some $\de_1, \de_2>0$. Choosing $0<\de_1< \big(p_1 \, c(p_1)\big)^{\frac{1}{p_1}}$ and $0<\de_2< \big(p_2 \, c(p_2)\big)^{\frac{1}{p_2}}$, we get
	\begin{equation}\label{eq:est}
	\begin{aligned}
	&(\al+1) \left( c(p_1)-\frac{\de_1^{p_1}}{p_1} \right) [\tilde{u}]_{s_1, p_1}^{p_1}+
	(\ba+1) \left( c(p_2)-\frac{\de_2^{p_2}}{p_2} \right) [\tilde{v}]_{s_2, p_2}^{p_2}\\
	&\qquad \leq \frac{(\al+1)}{\de_1^{p'_1}p'_1}\|f_1\|_{X^*_{0,s_1,p_1}}^{p'_1}+\frac{(\ba+1)}{\de_2^{p'_2}p'_2}\|f_2\|_{X^*_{0,s_2,p_2}}^{p'_2}.
	\end{aligned}
	\end{equation}
	Hence, (\ref{eq:est}) yields us
		\begin{equation}\label{eq:est-i}
	(\al+1) \left( c(p_1)-\frac{\de_1^{p_1}}{p_1} \right)[ \tilde{u}]_{s_1, p_1}^{p_1}
	 \leq \frac{(\al+1)}{\de_1^{p'_1}p'_1}\|f_1\|_{X^*_{0,s_1,p_1}}^{p'_1}+\frac{(\ba+1)}{\de_2^{p'_2}p'_2}\|f_2\|_{X^*_{0,s_2,p_2}}^{p'_2},
	\end{equation}
	and
		\begin{equation}\label{eq:est-ii}
	(\ba+1) \left( c(p_2)-\frac{\de_2^{p_2}}{p_2} \right)[\tilde{v}]_{s_2, p_2}^{p_2} \leq \frac{(\al+1)}{\de_1^{p'_1}p'_1}\|f_1\|_{X^*_{0,s_1,p_1}}^{p'_1}+\frac{(\ba+1)}{\de_2^{p'_2}p'_2}\|f_2\|_{X^*_{0,s_2,p_2}}^{p'_2}.
	\end{equation}
	Using (\ref{eq:M}), we get from (\ref{eq:est-ii}) that,
	\begin{equation*}
	(\ba+1) d(\de_1, \de_2) \left(c(p_2)-\frac{\de_2^{p_2}}{p_2} \right)
	b(\ba, p_2) \Big(\min\{ S_{p_1}^{p_1^*/p_1}, S_{p_2}^{p_2^*/p_2}\}\Big)^{\frac{p_2}{p_2^*-p_2}}
	<\|f_1\|_{X^*_{0,s_1,p_1}}^{p'_1}+\|f_2\|_{X^*_{0,s_2,p_2}}^{p'_2}.
	\end{equation*}
	By similar analysis, using (\ref{eq:M-}), we would obtain,
		\begin{equation*}
	(\al+1) d(\de_1, \de_2) \left( c(p_1)-\frac{\de_1^{p_1}}{p_1} \right)
	b(\al, p_1) \Big( \min\{S_{p_1}^{p_1^*/p_1}, S_{p_2}^{p_2^*/p_2}\}\Big)^{\frac{p_1}{p_1^*-p_1}}
	<\|f_1\|_{X^*_{0,s_1,p_1}}^{p'_1}+\|f_2\|_{X^*_{0,s_2,p_2}}^{p'_2}.
	\end{equation*}
	Consequently, we get
	\begin{gather*}
	\min\{\eps_1, \eps_2 \}< \|f_1\|_{X^*_{0,s_1,p_1}}^{p'_1}+\|f_2\|_{X^*_{0,s_2,p_2}}^{p'_2}
	\end{gather*}
	which is a contradiction to (\ref{eq:E}), bearing out the proof of the lemma.
\end{proof}	
The ensuing results below treat some property of the minimizing sequence.
\begin{lemma}\label{lem-7}
	Let $\frac{\al+1}{p_1^*}+\frac{\ba+1}{p_2^*}=1$ and $m$ be defined as in Lemma \ref{lem-4}. Let $\{(u_k,v_k) \}_{k \in \N} \subset \Theta$ be such that $\lim_{k \to \infty} \mathcal{I}(u_k,v_k)=m$. Then, there exists $(u^*,v^*) \in X$ such that $(u_k,v_k) \deb (u^*, v^*)$ weakly in $X$.
\end{lemma}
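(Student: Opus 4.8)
The plan is to reduce the statement to a single boundedness estimate: since $(X,\|\cdot\|)$ is reflexive, every bounded sequence admits a weakly convergent subsequence, so it suffices to prove that $\{[u_k]_{s_1,p_1}\}$ and $\{[v_k]_{s_2,p_2}\}$ are bounded in $k$. I would work entirely on the constraint set $\Theta$, where $\mathcal{I}$ takes the simplified form recorded right after the definition of $\Theta$, namely
\[
\mathcal{I}(u_k,v_k)=(\al+1)a(p_1)[u_k]_{s_1,p_1}^{p_1}+(\ba+1)a(p_2)[v_k]_{s_2,p_2}^{p_2}-(\al+1)a(1)\<f_1,u_k\>_1-(\ba+1)a(1)\<f_2,v_k\>_2.
\]

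The first observation is that the leading coefficients are strictly positive. Since $a(t)=\frac1t-\frac1{\al+\ba+2}$, hypothesis $(H_2)$, i.e. $\max\{p_1,p_2\}<\al+\ba+2$, gives $a(p_1)>0$ and $a(p_2)>0$, while $a(1)=\frac{\al+\ba+1}{\al+\ba+2}>0$. Because $\mathcal{I}(u_k,v_k)\to m$, the left-hand side above is bounded, say by $m+1$ for $k$ large, so after rearranging,
\[
(\al+1)a(p_1)[u_k]_{s_1,p_1}^{p_1}+(\ba+1)a(p_2)[v_k]_{s_2,p_2}^{p_2}\leq m+1+(\al+1)a(1)\<f_1,u_k\>_1+(\ba+1)a(1)\<f_2,v_k\>_2.
\]

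Next I would control the two linear terms by duality and absorb them into the superlinear ones. By the definition of the dual norm, $\<f_1,u_k\>_1\leq\|f_1\|_{X^*_{0,s_1,p_1}}[u_k]_{s_1,p_1}$ and likewise for $f_2$. Because $p_1,p_2>1$, Young's inequality gives, for any $\eta>0$, the bound $\|f_1\|_{X^*_{0,s_1,p_1}}[u_k]_{s_1,p_1}\leq\eta[u_k]_{s_1,p_1}^{p_1}+C(\eta)\|f_1\|_{X^*_{0,s_1,p_1}}^{p'_1}$, and analogously for the $v$-term with exponent $p_2$. Choosing $\eta$ small enough that the resulting $[u_k]^{p_1}$ and $[v_k]^{p_2}$ contributions are each at most half of the corresponding coercive coefficient, I move them to the left and obtain
\[
\tfrac12(\al+1)a(p_1)[u_k]_{s_1,p_1}^{p_1}+\tfrac12(\ba+1)a(p_2)[v_k]_{s_2,p_2}^{p_2}\leq m+1+C,
\]
with $C$ depending only on $\al,\ba,p_1,p_2$ and the dual norms of $f_1,f_2$. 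This bounds $[u_k]_{s_1,p_1}$ and $[v_k]_{s_2,p_2}$ uniformly, so $\{(u_k,v_k)\}$ is bounded in $X$; reflexivity then yields a subsequence, still denoted $\{(u_k,v_k)\}$, and a limit $(u^*,v^*)\in X$ with $(u_k,v_k)\deb(u^*,v^*)$.

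The only delicate point, and the main obstacle, is the absorption step: one must ensure the coefficients multiplying the superlinear norms remain strictly positive, which is precisely where $(H_2)$ is used, and note that the smallness of $f_1,f_2$ from $(H_3)$ is \emph{not} needed for mere boundedness (it enters later in the argument, not here). Everything else is a routine application of Young's inequality together with the duality estimate for $\<f_i,\cdot\>_i$.
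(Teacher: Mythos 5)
Your proof is correct, but it takes a different route from the paper's. The paper proves boundedness by invoking the Ekeland sequence from Lemma \ref{lem-3}: it combines the value bound \eqref{eq:I} with the derivative bound \eqref{eq:I'}, sandwiching the quantity $\<\mathcal{I}'_{|\Theta}(u_k,v_k),(u_k,v_k)\>-\mathcal{I}_{|\Theta}(u_k,v_k)$ between $\pm\big(m+\tfrac1k\|(u_k,v_k)\|_X\big)$, identifying the middle term with a positive combination of $[u_k]_{s_1,p_1}^{p_1}$ and $[v_k]_{s_2,p_2}^{p_2}$, and concluding that superlinear growth on the left cannot be dominated by the linear growth $\tfrac1k\|(u_k,v_k)\|_X$ on the right. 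You instead work only with the constrained form of the functional, $\mathcal{I}_{|\Theta}(u,v)=(\al+1)a(p_1)[u]_{s_1,p_1}^{p_1}+(\ba+1)a(p_2)[v]_{s_2,p_2}^{p_2}-(\al+1)a(1)\<f_1,u\>_1-(\ba+1)a(1)\<f_2,v\>_2$, use $(H_2)$ to get $a(p_i)>0$, and absorb the linear terms via duality and Young's inequality --- essentially the same coercivity computation the paper already performs inside Lemma \ref{lem-3} to show $\mathcal{I}$ is bounded below on $\Theta$, but now retaining the norm terms. Your version has a genuine advantage: it uses only $\mathcal{I}(u_k,v_k)\to m$ and membership in $\Theta$, so it applies to an \emph{arbitrary} minimizing sequence, which is what the lemma as stated actually asserts; the paper's argument tacitly restricts to the Ekeland sequence, for which the bound \eqref{eq:I'} is available. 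You are also right that $(H_3)$ plays no role here. The only cosmetic caveat is that $m<0$ by Lemma \ref{lem-4}, so the upper bound on $\mathcal{I}(u_k,v_k)$ for large $k$ should be written as $|m|+1$ (or simply ``a constant''), but this does not affect the argument.
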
	
\begin{proof}
Using Lemma \ref{lem-3}, we have,
\begin{equation*}
\begin{aligned}
-m-\frac{1}{k}\| (u_k, v_k) \|_X& \leq \< \mathcal{I}'_{\Theta}(u_k, v_k), (u_k, v_k) \>-\mathcal{I}_{\Theta}(u_k, v_k)\\
&\qquad \leq m+\frac{1}{k}\| (u_k, v_k)\|_X.
\end{aligned}
\end{equation*}	
This gives 
\begin{equation*}
\begin{aligned}
-m-\frac{1}{k}\| (u_k, v_k) \|_X& \leq \frac{\al+1}{p_1^*}\left(1-\frac{p_1}{\al+\beta+1}\right)[u_k]_{s_1, p_1}^{p_1} + \frac{\beta+1}{p_2^*}\left(1-\frac{p_2}{\al+\beta+1}\right)[v_k]_{s_2, p_2}^{p_2}\\
& \leq m+\frac{1}{k}\| (u_k, v_k)\|_X
\end{aligned}
\end{equation*}
which gives a contradiction if $\| (u_k, v_k)\|_X\to \infty$ as $k\to \infty$.
{Therefore $\{(u_k, v_k) \}_{k \in \N}$ is bounded in $X$.}
	Since $X$ is a reflexive Banach space, there exists $(u^*, v^*) \in X$ such that up to a subsequence, $(u_k, v_k) \deb (u^*, v^*)$ weakly in $X$ as $k \to \infty$, thereby completing the proof.
\end{proof}

\begin{proposition}\label{prop}
	Let $\de_1, \de_2 \in \R$ be such that $0<\de_i< \big( p_i\, c(p_i)\big)^{\frac{1}{p_i}}$ and $f_i \in X^*_{0, s_i, p_i}$ for $i=1,2$ satisfying
	$$ 0< \|f_1\|_{X^*_{0, s_1, p_1}}+\|f_2\|_{X^*_{0, s_2, p_2}}< \min\{\eps_1, \eps_2, 1\}. 
	$$
	Then there exists $\de_0>0$ such that 
	$$ \Big| \<\mathcal{K}'(u_k, v_k), (u_k, v_k)\>\Big| \geq \de_0>0,
	$$
	for any minimizing sequence $\{(u_k, v_k)\}_{k\in \N}$ of $\mathcal{I}$ in $\Theta$.
\end{proposition}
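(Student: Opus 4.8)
The plan is to argue by contradiction and to re-run the mechanism of Lemma \ref{lem-6} quantitatively. First I would record that, for $(u,v)\in X$,
\begin{equation*}
\<\mathcal{K}'(u,v),(u,v)\>=p_1(\al+1)[u]_{s_1,p_1}^{p_1}+p_2(\ba+1)[v]_{s_2,p_2}^{p_2}-(\al+\ba+2)^2\Iom|u|^{\al+1}|v|^{\ba+1}\,dx-(\al+1)\<f_1,u\>_1-(\ba+1)\<f_2,v\>_2,
\end{equation*}
which is exactly the left-hand side of (\ref{eq:c}). Suppose the assertion fails; then some minimizing sequence $\{(u_k,v_k)\}\subset\Theta$ admits a subsequence along which $G_k:=\<\mathcal{K}'(u_k,v_k),(u_k,v_k)\>\to0$. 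By Lemma \ref{lem-7} the sequence is bounded in $X$, so $[u_k]_{s_1,p_1}$, $[v_k]_{s_2,p_2}$ and $\Iom|u_k|^{\al+1}|v_k|^{\ba+1}\,dx$ are all bounded.

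Next I would form the same two algebraic combinations as in Lemma \ref{lem-6}, the only difference being that the $\mathcal{K}'$-identity now carries the remainder $G_k$ rather than $0$. Subtracting the membership relation $\mathcal{K}(u_k,v_k)=0$ from $G_k$ gives
\begin{equation*}
(\al+1)(p_1-1)[u_k]_{s_1,p_1}^{p_1}+(\ba+1)(p_2-1)[v_k]_{s_2,p_2}^{p_2}=(\al+\ba+2)(\al+\ba+1)\Iom|u_k|^{\al+1}|v_k|^{\ba+1}\,dx+G_k,
\end{equation*}
while forming $G_k-(\al+\ba+2)\mathcal{K}(u_k,v_k)$ and estimating $\<f_i,\cdot\>_i$ by Young's inequality with parameters $\de_1,\de_2$ in the admissible range $0<\de_i<(p_ic(p_i))^{1/p_i}$ (nonempty by $(H_2)$) reproduces (\ref{eq:est})--(\ref{eq:est-ii}) up to an $o(1)$ that absorbs $-G_k/(\al+\ba+1)$; in particular
\begin{equation*}
(\ba+1)\Big(c(p_2)-\frac{\de_2^{p_2}}{p_2}\Big)[v_k]_{s_2,p_2}^{p_2}\le\frac{\al+1}{\de_1^{p'_1}p'_1}\|f_1\|_{X^*_{0,s_1,p_1}}^{p'_1}+\frac{\ba+1}{\de_2^{p'_2}p'_2}\|f_2\|_{X^*_{0,s_2,p_2}}^{p'_2}+o(1),
\end{equation*}
and symmetrically for $[u_k]_{s_1,p_1}^{p_1}$.

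The delicate point is to recover the Sobolev lower bound (\ref{eq:M}) despite the remainder, and for this I must first exclude that both seminorms vanish. Since $(u_k,v_k)\in\Theta$ we have $\mathcal{I}(u_k,v_k)=\mathcal{I}_{|\Theta}(u_k,v_k)$, and every term of $\mathcal{I}_{|\Theta}$ is dominated by $[u_k]_{s_1,p_1}$ and $[v_k]_{s_2,p_2}$; if both tended to $0$ then $\mathcal{I}(u_k,v_k)\to0$, contradicting $m<0$ from Lemma \ref{lem-4}. Passing to a further subsequence I may therefore assume $[u_k]_{s_1,p_1}^{p_1^*}\le[v_k]_{s_2,p_2}^{p_2^*}$ together with $[v_k]_{s_2,p_2}\ge c>0$. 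Inserting the Young--Sobolev estimate of Lemma \ref{lem-6} into the first identity of the previous paragraph and dividing by $[v_k]_{s_2,p_2}^{p_2}\ge c^{p_2}$ turns the remainder into another $o(1)$ and yields $[v_k]_{s_2,p_2}\ge M-o(1)$, with $M$ as in (\ref{eq:M}).

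Finally I would substitute $[v_k]_{s_2,p_2}^{p_2}\ge M^{p_2}-o(1)$ into the $[v_k]$-estimate above, let $k\to\infty$, and take $\de_1=\nu,\de_2=\mu$, so that the left-hand side becomes exactly $\eps_2$; bounding the right-hand side by $d(\nu,\mu)^{-1}\big(\|f_1\|_{X^*_{0,s_1,p_1}}^{p'_1}+\|f_2\|_{X^*_{0,s_2,p_2}}^{p'_2}\big)$ gives
\begin{equation*}
\eps_2\le\|f_1\|_{X^*_{0,s_1,p_1}}^{p'_1}+\|f_2\|_{X^*_{0,s_2,p_2}}^{p'_2},
\end{equation*}
while the opposite alternative $[u_k]_{s_1,p_1}^{p_1^*}\ge[v_k]_{s_2,p_2}^{p_2^*}$ produces $\eps_1$ on the left. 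Since $(H_3)$ forces $\|f_i\|_{X^*_{0,s_i,p_i}}<1$ and hence $\|f_i\|_{X^*_{0,s_i,p_i}}^{p'_i}\le\|f_i\|_{X^*_{0,s_i,p_i}}$, the right-hand side is strictly smaller than $\min\{\eps_1,\eps_2\}$, which is the desired contradiction. I expect the only genuinely new difficulty beyond Lemma \ref{lem-6} to be this lower-bound step: the remainder $G_k$ can be absorbed uniformly only after the quantitative separation $[v_k]_{s_2,p_2}\ge c>0$, which in turn rests on $m<0$.
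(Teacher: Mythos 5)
Your proposal is correct and follows essentially the same route as the paper: a contradiction argument that re-runs the estimates of Lemma \ref{lem-6} with the remainder $t_k=\<\mathcal{K}'(u_k,v_k),(u_k,v_k)\>\to0$ carried through as an $o(1)$ term, then passes to the limit and compares against $\eps_1,\eps_2$ using $\|f_i\|_{X^*_{0,s_i,p_i}}<1$. The one step you elaborate more fully than the paper --- deducing the quantitative separation $[v_k]_{s_2,p_2}\ge c>0$ from $m<0$ before absorbing the remainder --- is exactly what the paper's terse appeal to Lemma \ref{lem-4} is meant to supply, so this is a welcome clarification rather than a departure.
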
		
\begin{proof}
 By contradiction, suppose the assertion is not true. Then, there exists a minimizing sequence $\{(u_k, v_k)\}_{k \in \N} \subset \Theta$ of $\mathcal{I}$ such that
	\begin{equation}\label{new-1}
	    \Big| \< \mathcal{K'}(u_k, v_k), (u_k, v_k) \>\Big| \to 0 \text{ as }\, k \to \infty.
	\end{equation}
	Let $t_k:=\< \mathcal{K'}(u_k, v_k), (u_k, v_k) \> \, \text{for all}\, k \in \N$. Then by \eqref{new-1}, we have
	\begin{equation}\label{eq:t_k}
	\lim_{k \to \infty} |t_k|=0,\,\text{ that is }\, \lim_{k \to \infty}t_k=0.
	\end{equation}	
	Since $(u_k, v_k) \subset \Theta$, we have
 \begin{equation}\label{eq:^}
	\begin{aligned}
	&(\al+1) [u_k]_{s_1, p_1}^{p_1}+(\ba+1) [ v_k]_{s_2, p_2}^{p_2}-(\al+\ba+2) \Iom |u_k|^{\al+1} |v_k|^{\ba+1}\,dx\\
	&\qquad -(\al+1) \<f_1, u_k\>_1-(\ba+1)\<f_2, v_k\>_2=0.
	\end{aligned}
	\end{equation}
	Combining definition of $t_k$ with (\ref{eq:^}) we have,
	\begin{equation}\label{eq:+1}
	\begin{aligned}
	&	(\al+1)(p_1-1)[ u_k]_{s_1, p_1}^{p_1}+(\ba+1)(p_2-1) [v_k]_{s_2, p_2}^{p_2}\\
		&\qquad=(\al+\ba+2)(\al+\ba+1) \Iom |u_k|^{\al+1} |v_k|^{	\ba+1}\,dx+t_k.
	\end{aligned}
	\end{equation}	
	By similar argument as in the proof of Lemma \ref{lem-6}, we obtain assuming $ [u_k]_{s_1, p_1}^{p_1^*} \leq [v_k]_{s_2, p_2}^{p_2^*}$,	\begin{equation*}
\min\{S_{p_1}^{p_1^*/p_1} , S_{p_2}^{p_2^*/p_2}\}\left( b(\ba, p_2)-\frac{t_k}{[v_k]_{s_2, p_2}^{p_2}}\right)
	\leq [v_k]_{s_2, p_2}^{p_2^*-p_2}.
	\end{equation*}	
Therefore, we conclude using Lemma \ref{lem-4} that
$$\frac{1}{[v_k]_{s_2, p_2}} \leq M_0 \,\text{ for some }\, M_0>0.$$
Thus, we have
$$\frac{t_k}{[v_k]_{s_2, p_2}^{p_2}}< b(\ba, p_2) \,\text{ for some }\, k \,\text{ large enough. }\, 
$$
This yields
\begin{gather*}
\Big(  b(\ba, p_2) \min\{S_{p_1}^{p_1^*/p_1},S_{p_2}^{p_2^*/p_2} \} \Big)^{\frac{1}{p_2^*-p_2}}-At_k \leq [v_k]_{s_2, p_2},
\end{gather*}
where $A$ is a constant depending on $\al, \ba, p_i, p_i^*, S_{p_i}$, $i=1,2$. As a result, we have for 
$0<\de_i<\big(p_i\, c(p_i) \big)^{\frac{1}{p_i}}$
\begin{equation*}
\begin{aligned}
&(\al+1) d(\de_1, \de_2) \left(c(p_1)-\frac{\de_1^{p_1}}{p_1} \right)	\Big(  b(\al, p_1) \min\{S_{p_1}^{p_1^*/p_1},S_{p_2}^{p_2^*/p_2} \} \Big)^{\frac{p_1}{p_1^*-p_1}}-A t_k\\
& \leq \|f_1\|_{X^*_{0,s_1, p_1}}+\|f_2\|_{X^*_{0,s_2, p_2}}+t_k,
\end{aligned}
\end{equation*}		
and
\begin{equation*}
	\begin{aligned}
		&(\ba+1) d(\de_1, \de_2) \left( c(p_2)-\frac{\de_2^{p_2}}{p_1} \right)	\Big(  b(\beta, p_2) \min\{S_{p_1}^{p_1^*/p_1}, S_{p_2}^{p_2^*/p_2}\}  \Big)^{\frac{p_2}{p_2^*-p_2}}-B t_k\\
		& \leq \|f_1\|_{X^*_{0,s_1, p_1}}+\|f_2\|_{X^*_{0,s_2, p_2}}+t_k.
	\end{aligned}
\end{equation*}	
Letting $k \to \infty$ we get, 
\begin{equation*}
( \al +1) d(\de_1, \de_2) \left( c(p_1)-\frac{\de_1^{p_1}}{p_1} \right) \Big(b(\al, p_1)\, \min\{S_{p_1}^{p_1^*/p_1}, S_{p_2}^{p_2^*/p_2}\} \Big)^{\frac{p_1}{p_1^*-p_1}} \leq \|f_1\|_{X^*_{0,s_1,p_1}}+\|f_2\|_{X^*_{0,s_2,p_2}}
\end{equation*}	
and
\begin{equation*}
	( \ba +1) d(\de_1, \de_2) \left( c(p_2)-\frac{\de_2^{p_2}}{p_2} \right) \Big(b(\ba, p_2)\, \min\{S_{p_1}^{p_1^*/p_1},S_{p_2}^{p_2^*/p_2}\} \Big)^{\frac{p_2}{p_2^*-p_2}} \leq \|f_1\|_{X^*_{0,s_1,p_1}}+\|f_2\|_{X^*_{0,s_2,p_2}}.
\end{equation*}	
This contradicts the hypothesis of the proposition, thereby winding-up the proof.
\end{proof}	

\subsection{Proof of Theorem \ref{thm-main}}

Taking into account the minimizing sequence $\{ (u_k, v_k)\}_{k \in \N} \subset \Theta$, we write,
$$\mathcal{I}'(u_k, v_k)=\mathcal{I}'|_{\Theta}(u_k, v_k)-\la_k \mathcal{K}'(u_k, v_k),
$$ that is,
\begin{gather*}
\<	\mathcal{I}'(u_k, v_k), (u_k, v_k) \>=\< \mathcal{I}'|_{\Theta}(u_k, v_k), (u_k, v_k) \>-\la_k\< \mathcal{K}'(u_k, v_k),(u_k, v_k)\>.
\end{gather*}	
We notice that $\< \mathcal{I}'(u_k, v_k), (u_k, v_k)\>=0$ on L.H.S., as $\{ (u_k, v_k)\}_{k \in \N} \subset \Theta$.
On R.H.S., the sequence $\{(u_k, v_k) \}_{k \in \N}$ be such that,
$$ \mathcal{I'}|_{\Theta}(u_k, v_k) \to 0\,\text{ as }\, k \to \infty.
$$
This implies,
$$ \<\mathcal{I'}|_{\Theta}(u_k, v_k), (u_k, v_k)\> \to 0\,\text{ as }\, k \to \infty.
$$
Using Proposition \ref{prop}, we conclude that $\la_k \to 0$. As a consequence, we notice that,
\begin{gather*}
\lim_{k \to \infty} \< \mathcal{I}'(u_k, v_k), (\phi_1, \phi_2)\>=0 \;\;\text{for all}\, (\phi_1, \phi_2)\in X.
\end{gather*}	
Hence, we obtain
\begin{equation*}
	\begin{aligned}
\begin{cases}
	(-\De)^{s_1}_{p_1} u_k&=u_k|u_k|^{\al-1}|v_k|^{\ba+1}+f_1+f_k\,\text{ in }\, \Om,\\
	(-\De)^{s_2}_{p_2} v_k&=|u_k|^{\al+1}v_k |v_k|^{\ba-1}+f_2+g_k\,\text{ in }\, \Om,\\
	&u_k=v_k=0\;\text{in}\; \mathbb R^N \setminus\Om,
\end{cases}	
\end{aligned}
\end{equation*}
where $\{f_k\}$ and $\{g_k\}$ denote two sequences converging strongly to $0$ in $(X_{0,s_i, p_i}(\Om))^*$ respectively for $i=1,2$. Let us define
$\{M_k\}_{k \in \N}$ and $\{N_k\}_{k_n}$ be such that
$M_k=u_k|u_k|^{\al-1}|v_k|^{\ba+1}$ and $N_k=|u_k|^{\al+1} v_k|v_k|^{\ba-1}$. We now approach towards the following proposition regarding $\{M_k\}$ and $\{N_k\}$.

\begin{proposition}\label{prop-bounded}
The sequences $\{M_k\}_{k \in \N}$ and $\{N_k\}_{k \in \N}$ defined above are  bounded in $X^*_{0, s_1, p_1}(\Om)$ and $X^*_{0, s_2, p_2}(\Om)$ respectively.	
\end{proposition}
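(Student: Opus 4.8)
The plan is to show that $M_k$ and $N_k$ induce bounded linear functionals on $X_{0,s_1,p_1}(\Om)$ and $X_{0,s_2,p_2}(\Om)$ respectively, with bounds uniform in $k$; I carry out the argument for $\{M_k\}$, the one for $\{N_k\}$ being symmetric after interchanging the two factors and the pairs $(s_1,p_1)$, $(s_2,p_2)$. By the continuous embedding $X_{0,s_1,p_1}(\Om)\hookrightarrow L^{p_1^*}(\Om)$ it is enough to estimate, for an arbitrary test function $\phi\in X_{0,s_1,p_1}(\Om)$,
\begin{equation*}
\big|\<M_k,\phi\>\big|=\Big|\Iom u_k|u_k|^{\al-1}|v_k|^{\ba+1}\,\phi\,dx\Big|\leq \Iom |u_k|^{\al}|v_k|^{\ba+1}|\phi|\,dx,
\end{equation*}
by a constant multiple of $[\phi]_{s_1,p_1}$ that is independent of $k$.

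The crux is a single application of the generalized H\"older inequality with three exponents $r_1,r_2,r_3$ chosen so that the factors $|u_k|^{\al}$, $|v_k|^{\ba+1}$, $\phi$ sit in $L^{r_1},L^{r_2},L^{r_3}$ with
\begin{equation*}
\frac{1}{r_1}=\frac{\al}{p_1^*},\qquad \frac{1}{r_2}=\frac{\ba+1}{p_2^*},\qquad \frac{1}{r_3}=\frac{1}{p_1^*}.
\end{equation*}
With this choice $\frac{1}{r_1}+\frac{1}{r_2}+\frac{1}{r_3}=\frac{\al+1}{p_1^*}+\frac{\ba+1}{p_2^*}$, which equals $1$ \emph{precisely} by hypothesis $(H_1)$, since $\frac{1}{p_i^*}=\frac{N-s_ip_i}{Np_i}$. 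H\"older then gives
\begin{equation*}
\Iom |u_k|^{\al}|v_k|^{\ba+1}|\phi|\,dx\leq |u_k|_{L^{p_1^*}(\Om)}^{\al}\,|v_k|_{L^{p_2^*}(\Om)}^{\ba+1}\,|\phi|_{L^{p_1^*}(\Om)}.
\end{equation*}

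To conclude, I bound $|\phi|_{L^{p_1^*}(\Om)}$ by $S_{p_1}^{-1/p_1}[\phi]_{s_1,p_1}$ through the Sobolev inequality, and I use that the minimizing sequence $\{(u_k,v_k)\}$ is bounded in $X$, as established in Lemma \ref{lem-7}; together with the embeddings $X_{0,s_i,p_i}(\Om)\hookrightarrow L^{p_i^*}(\Om)$ this makes $|u_k|_{L^{p_1^*}}$ and $|v_k|_{L^{p_2^*}}$ bounded uniformly in $k$. Dividing by $[\phi]_{s_1,p_1}$ and taking the supremum over $\phi\neq 0$ then yields $\sup_k\|M_k\|_{X^*_{0,s_1,p_1}}<\infty$, and symmetrically $\sup_k\|N_k\|_{X^*_{0,s_2,p_2}}<\infty$.

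The main point to watch is the \emph{admissibility} of the three H\"older exponents. Hypothesis $(H_1)$ both forces $r_1^{-1}+r_2^{-1}+r_3^{-1}=1$ and, since each of $\frac{\al+1}{p_1^*}$ and $\frac{\ba+1}{p_2^*}$ is positive and strictly less than $1$, automatically gives $\al<p_1^*$ and $\ba+1<p_2^*$, so that $r_1,r_2>1$; thus the upper bounds come for free. The only substantive requirement left is the nonnegativity of the exponent carried by $|u_k|^{\al}$, i.e. $\al\geq 0$ (and, for $N_k$, the symmetric condition $\ba\geq 0$), the endpoint $\al=0$ collapsing to a two-factor H\"older handled identically. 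This exponent bookkeeping is the only genuinely delicate step; everything else is routine once the uniform $L^{p_i^*}$ bounds coming from Lemma \ref{lem-7} are in hand.
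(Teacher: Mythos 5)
Your proof is correct and follows essentially the same route as the paper: a single H\"older estimate with exponents $p_1^*/\al$, $p_2^*/(\ba+1)$, $p_1^*$ made admissible by $(H_1)$, combined with the uniform $L^{p_i^*}$ bounds from Lemma \ref{lem-7} and the duality of the Sobolev embedding (the paper phrases this as boundedness in $L^{(p_1^*)'}(\Om)$ followed by the embedding $L^{(p_1^*)'}(\Om)\hookrightarrow X^*_{0,s_1,p_1}(\Om)$, which is the same thing). Your closing remark that the argument tacitly needs $\al\geq 0$ (resp.\ $\ba\geq 0$) is a fair observation about a point the paper passes over silently, but it does not change the fact that the two arguments coincide.
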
	

\begin{proof}
We know that the embedding $X_{0, s_i, p_i}(\Om) \hookrightarrow L^{p_i^*}(\Om)$ and
 $L^{(p_i^*)'}(\Om) \hookrightarrow X^*_{0, s_i, p_i}(\Om)$ are continuous. 
 Our aim is to deduce that $\{M_k\}_{k \in \N}$ and $\{N_k\}_{k \in \N}$ are bounded in $L^{(p_i^*)'}(\Om)$ respectively for $i=1,2$.
To conclude our result, it is sufficient to consider $\{M_k\}_{k \in \N}$.
Using the fact that $\{u_k\}_{k \in \N}$ and $\{v_k\}_{k \in \N}$ are bounded respectively in $X_{0, s_i, p_i}(\Om)$ for $i=1,2$ (refer Lemma \ref{lem-7}) and embedding results, we have the following estimate:
for every $\psi \in L^{p_1^*}(\Om)$, there exists a constant $\mathcal{F}'>0$ such that,
\begin{equation}\label{eq:b-1}
\begin{aligned}
\Big| \Iom M_k \psi \, dx \Big| &\leq \Iom |M_k||\psi|\,dx \leq \Iom |u_k|^{\al}|v_k|^{\ba+1}|\psi|\,dx\\
&\leq |u_k|_{L^{p_1^*}(\Om)}^{\al} |v_k|_{L^{p_2^*}(\Om)}^{\ba+1}|\psi|_{L^{p_1^*}(\Om)} \leq \mathcal{F}'|\psi|_{L^{p_1^*}(\Om)}.
\end{aligned}
\end{equation}
We note that (\ref{eq:b-1}) is valid due to H\"older's inequality as,
$$ \frac{\al+1}{p_1^*}+\frac{\ba+1}{p_2^*}=1.
$$
Using (\ref{eq:b-1}), for every $\psi \in L^{(p_1^*)'}(\Om) \setminus \{0\}$, we have,
\begin{equation*}
	\frac{\left| \Iom M_k \psi \, dx \right|}{\|\psi\|_{L^{p_1^*}}} \leq \mathcal{F}'.
\end{equation*}	
This implies,
\begin{equation*}
\sup_{\psi \in L^{p_1^*}(\Om) \setminus \{0\}  }\frac{\Big| \Iom M_k \psi \, dx \Big|}{|\psi|_{L^{p_1^*}(\Om)}} =|M_k|_{L^{(p_1^*)'}(\Om)}
\leq \mathcal{F}'.
\end{equation*}	
We conclude that $\{M_k\}_{k \in \N}$ is bounded in $L^{(p_i^*)'}(\Om)$ and thereupon, is bounded in $X^*_{0, s_1, p_1}(\Om)$. This completes the proof.	
	
\end{proof}	

\begin{proposition}\label{prop-bounded1}
	The sequences $\{M_k\}_{k \in \N}$ and $\{N_k\}_{k \in \N}$ are bounded in $L^1(\Om)$. Moreover, we have,
	\begin{equation*}
		\lim_{k \to \infty} \Iom M_k \phi_1\,dx=\Iom M^*\phi_1\,dx,\,\forall\, \phi_1 \in L^{\infty}(\Om),
	\end{equation*}	 
and
\begin{equation*}
	\lim_{k \to \infty} \Iom N_k \phi_2 \,dx=\Iom N^*\phi_2\,dx,\,\forall\, \phi_2 \in L^{\infty}(\Om),
\end{equation*}	 
where $M^*=u^*|u^*|^{\al-1}|v^*|^{\ba+1}$, $N^*=|u^*|^{\al+1}v^*|v^*|^{\ba-1}$ and $(u^*,v^*)$ is as defined in Lemma \ref{lem-7}.
\end{proposition}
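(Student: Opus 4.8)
The plan is to prove the two assertions separately: the $L^1$-bound comes essentially for free from the already-established $X^*$ bound, while the limit identities follow from a compactness-plus-pointwise-convergence argument. For the boundedness, I would note that the proof of Proposition \ref{prop-bounded} in fact shows $\{M_k\}$ bounded in $L^{(p_1^*)'}(\Om)$ and $\{N_k\}$ bounded in $L^{(p_2^*)'}(\Om)$. Since $\Om$ has finite measure and $(p_i^*)'>1$, H\"older's inequality yields the continuous inclusion $L^{(p_i^*)'}(\Om)\hookrightarrow L^1(\Om)$, so both sequences are bounded in $L^1(\Om)$.

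For the convergence statements, first recall from Lemma \ref{lem-7} that $(u_k,v_k)\deb(u^*,v^*)$ in $X$, i.e. $u_k\deb u^*$ in $X_{0,s_1,p_1}(\Om)$ and $v_k\deb v^*$ in $X_{0,s_2,p_2}(\Om)$. The compact embeddings $X_{0,s_i,p_i}(\Om)\hookrightarrow L^r(\Om)$ for $r<p_i^*$ then give $u_k\to u^*$ and $v_k\to v^*$ strongly in such $L^r$, hence, passing to a subsequence, $u_k\to u^*$ and $v_k\to v^*$ a.e. in $\Om$. Since $t\mapsto t|t|^{\al-1}$ and $t\mapsto|t|^{\ba+1}$ are continuous (this is where $\al,\ba>-1$ enters), I obtain $M_k\to M^*$ and $N_k\to N^*$ a.e. in $\Om$ along that subsequence.

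The decisive step is to upgrade a.e. convergence to weak convergence tested against $L^\infty$ functions. For this I would invoke the standard fact that a sequence which is bounded in $L^q(\Om)$ with $q>1$ and converges a.e. must converge weakly in $L^q(\Om)$ to its pointwise limit (extract a weak limit by reflexivity, then identify it with the a.e. limit via Egorov's theorem or Mazur's lemma). Applying this with $q=(p_1^*)'$ to $\{M_k\}$ and with $q=(p_2^*)'$ to $\{N_k\}$ gives $M_k\deb M^*$ in $L^{(p_1^*)'}(\Om)$ and $N_k\deb N^*$ in $L^{(p_2^*)'}(\Om)$. Because $\Om$ is bounded, $L^\infty(\Om)\subset L^{p_i^*}(\Om)=\big(L^{(p_i^*)'}(\Om)\big)^*$, so testing these weak convergences against $\phi_1,\phi_2\in L^\infty(\Om)$ produces precisely the two claimed identities; a routine subsequence argument then promotes the convergence of the integrals from the a.e.-convergent subsequence to the full sequence, since $M^*$ and $N^*$ are fixed by the single pair $(u^*,v^*)$.

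The main obstacle is exactly this identification of the weak limit: a.e. convergence alone is too weak, and one must genuinely exploit the uniform bound in a reflexive $L^q$ with $q>1$ \emph{strictly}---which is why Proposition \ref{prop-bounded} is phrased in $L^{(p_i^*)'}$ rather than only in $L^1$. The finiteness of $|\Om|$ is what allows the $L^\infty$ test functions to be regarded as elements of the dual space $L^{p_i^*}(\Om)$, closing the argument.
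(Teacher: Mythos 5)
Your proof is correct, and it reaches the stated conclusion by a somewhat more abstract route than the paper. The paper's own argument also starts from the a.e.\ convergence $M_k\to M^*$, $N_k\to N^*$ (asserted from Lemma \ref{lem-7}; you supply the missing details via the compact embeddings $X_{0,s_i,p_i}(\Om)\hookrightarrow L^r(\Om)$, $r<p_i^*$), but then it runs an explicit Egorov argument: it picks a set $\mathcal{A}$ with $|\Om\setminus\mathcal{A}|$ small, uses uniform convergence on $\mathcal{A}$ and a H\"older estimate with the conjugate pair $(R_{\al\ba},p_1^*)$ on $\Om\setminus\mathcal{A}$ (exploiting the same higher integrability you use), and thereby obtains \emph{strong} convergence $N_k\to N^*$ in $L^1(\Om)$, from which the weak statement against $L^\infty$ test functions is immediate. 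You instead invoke the standard lemma that a sequence bounded in a reflexive $L^q(\Om)$, $q>1$, and converging a.e.\ converges weakly in $L^q$ to its pointwise limit, apply it with $q=(p_i^*)'$, and use $L^\infty(\Om)\subset L^{p_i^*}(\Om)$ on the bounded domain. What your version buys is a cleaner argument that proves exactly what is claimed (weak convergence) without the slightly delicate quantitative bookkeeping; what the paper's version buys is the stronger conclusion of strong $L^1$ convergence of $M_k$ and $N_k$, which is not actually needed in the sequel. Your closing subsequence--uniqueness argument to pass from the a.e.-convergent subsequence back to the full sequence is a point the paper glosses over, and it is worth keeping. One shared delicacy that neither treatment resolves is the continuity of $t\mapsto t|t|^{\al-1}$ at $t=0$ when $-1<\al<0$; since the paper's proof has the same implicit assumption, this is not a gap specific to your proposal.
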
	
\begin{proof}
	Using H\"older inequality, we can see that $\{M_k\}_{k \in \N}$ and $\{N_k\}_{k \in \N}$ are bounded in $L^1(\Om)$. Using Lemma \ref{lem-7}, we obtain the existence of $u^*$ and $v^*$ which furthermore guarantees that some subsequences denoted again by $\{M_k\}_{k \in \N}$ and $\{N_k\}_{k \in \N}$ converge a.e. in $\Om$ to $M^*$ and $N^*$ respectively. In a similar fashion, we can show that $M^*$ and $N^*$ belong to $L^1(\Om)$. Let 
	$$|N_k-N^*|_{L^1(\Om)}< (\mathcal{F}')^{R_{\alpha\beta}}$$ where $R_{\alpha\beta}>0$ is such that
	$$\frac{1}{R_{\al \ba}}=\frac{\al}{p_1^*}+\frac{\ba+1}{p_2^*}.$$
	We will now apply Egoroff's Theorem \cite{Brezis-83}. For fix $\eps>0$, there exists a measurable subset $\mathcal{A}$ of $\Om$ such that 
	\begin{gather*}
		\big| \Om \setminus \mathcal{A} \big|^{\frac{1}{p_1^*}}< \frac{\eps}{2\mathcal{F}'}.
	\end{gather*}	
and the sequence $\{N_k \}_{k \in \N}$ converges uniformly to $N^*$ in $\mathcal{A}$. As a consequence, we obtain,
\begin{gather*}
	\Iom |N_k-N^*|\,dx=\int_{\Om \setminus \mathcal{A}}|N_k-N^*|\,dx+\int_{\mathcal{A}}|N_k-N^*|\,dx.
\end{gather*}	
Using uniform convergence of $N_k$ to $N^*$ in $\mathcal{A}$, we note that there exists $k_\eps>0$ such that for $k \geq k_\eps$, we have,
\begin{gather*}
	|N_k(x)-N^*(x)| \leq \frac{\eps}{2|\Om|},\;\; \forall x \in \mathcal{A}.
\end{gather*}	
So, for all $k \geq k_\eps$, we have,
	\begin{gather}\label{eq:a-1}
	\int_{\mathcal{A}}	|N_k(x)-N^*(x)|\,dx \leq \int_{\mathcal{A}}
	 \frac{\eps}{2|\Om|}\,dx \leq \frac{\eps}{2}.
	\end{gather}	
On another note, we observe that
\begin{equation*}
	\begin{aligned}
\int_{\Om \setminus \mathcal{A}}|N_k-N^*|\,dx&=\Iom |N_k-N^*|\chi_{\Om \setminus \mathcal{A}}\,dx\\
&\leq \Big(\Iom |N_k-N^*|\,dx  \Big)^{\frac{1}{R_{\al \ba}}}\big| \Om \setminus \mathcal{A} \big|^{\frac{1}{p_1^*}},
\end{aligned}
\end{equation*}	
Using the above $L^1(\Om)$ bound of $\{ N_k-N^*\}_{k \in \N}$ and $\big| \Om \setminus \mathcal{A} \big|^{\frac{1}{p_1^*}}<\frac{\eps}{2\mathcal{F}'}$,  we get
\begin{equation}\label{eq:a-2}
	\int_{\Om \setminus \mathcal{A}} |N_k-N^*|\,dx \leq \frac{\eps}{2}.
\end{equation}	
Combining (\ref{eq:a-1}) and (\ref{eq:a-2}) we see that,
\begin{equation*}
	\Iom |N_k-N^*|\,dx \leq \eps.
\end{equation*}	
Thus, $\{N_k\}_{k \in \N}$ converges to $N^*$ in $L^1(\Om)$. Consequently, without loss of generality, we get $\{N_k\}_{k \in \N}$ converges weakly to $N^*$ in $L^1(\Om)$ and similar arguments will lead to $\{M_k\}_{k \in \N}$ converges weakly to $M^*$ in $L^1(\Om)$. This finishes the proof. 
\end{proof}	

We now provide a direct consequence of Proposition \ref{prop-bounded1}.
{Firstly, observing that $\{M_k\}_{k \in \N}$ is bounded in $L^1(\Om)$, we
obtain the strong convergence of $[u_k]_{s_1, p_1}^{p_1}$ to $[u^*]^{p_1}_{s_1,p_1}$ in $L^{t_1}(\Om)$ for every $t_1<p_1$ and 
$[v_k]_{s_2, p_2}^{p_2}$ to $[v^*]^{p_2}_{s_2,p_2}$ in $L^{t_2}(\Om)$ for every $t_2<p_2$.} As a follow-up, we note that (see Lemma 3.3, \cite{Chen-16}),
\begin{equation}\label{eq:u_k}
    \frac{|u_k(x)-u_k(y)||^{p_1-2}(u_k(x)-u_k(y))}{|x-y|^{\frac{N+p_1s_1}{p'_1}}} \rightharpoonup \frac{|u^*(x)-u^*(y)||^{p_1-2}(u^*(x)-u^*(y))}{|x-y|^{\frac{N+p_1s_1}{p'_1}}}
\end{equation}
weakly in $L^{p'_1}(\mathbb R^{2N})$ and 
\begin{equation}\label{eq:v_k}
    \frac{|v_k(x)-v_k(y)||^{p_2-2}(v_k(x)-v_k(y))}{|x-y|^{\frac{N+p_2s_2}{p'_2}}} \rightharpoonup \frac{|v^*(x)-v^*(y)||^{p_2-2}(v^*(x)-v^*(y))}{|x-y|^{\frac{N+p_2s_2}{p'_2}}}
\end{equation}
weakly in $L^{p'_2}(\mathbb R^{2N})$. We now take care of an important corollary stated below.
\begin{corollary}
There holds,
\begin{equation*}
	\lim_{k \to \infty} \Iom M_k \phi_1 \,dx=\Iom M^* \phi_1 \,dx \, \text{ for all }\, \phi_1 \in X_{0, s_1, p_1}(\Om)
\end{equation*}	
and
\begin{equation*}
	\lim_{k \to \infty} \Iom N_k \phi_2 \,dx=\Iom N^* \phi_2 \,dx \, \text{ for all }\, \phi_2 \in X_{0, s_2, p_2}(\Om).
\end{equation*}	
\end{corollary}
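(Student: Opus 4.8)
The plan is to upgrade the convergence statement of Proposition \ref{prop-bounded1}, which is established only for test functions in $L^{\infty}(\Om)$, to arbitrary test functions in $X_{0,s_i,p_i}(\Om)$. The two facts I would combine are the uniform dual bounds from Proposition \ref{prop-bounded} and the $L^{\infty}$-testing convergence from Proposition \ref{prop-bounded1}, glued together by a density argument. I treat $\{M_k\}$ in detail; the case of $\{N_k\}$ is entirely analogous with $p_2^*$ in place of $p_1^*$. Since the embedding $X_{0,s_1,p_1}(\Om)\hookrightarrow L^{p_1^*}(\Om)$ is continuous, it suffices to prove
\begin{equation*}
\lim_{k\to\infty}\Iom M_k\phi_1\,dx=\Iom M^*\phi_1\,dx \quad\text{for every } \phi_1\in L^{p_1^*}(\Om).
\end{equation*}
By Proposition \ref{prop-bounded}, $\{M_k\}$ is bounded in $L^{(p_1^*)'}(\Om)$, say $|M_k|_{L^{(p_1^*)'}(\Om)}\le C$ for all $k$. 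Because Proposition \ref{prop-bounded1} gives $M_k\to M^*$ a.e.\ in $\Om$, Fatou's lemma yields $M^*\in L^{(p_1^*)'}(\Om)$ with $|M^*|_{L^{(p_1^*)'}(\Om)}\le C$, so the differences $M_k-M^*$ are uniformly bounded in $L^{(p_1^*)'}(\Om)$.

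The heart of the argument is a splitting based on density. Fix $\phi_1\in L^{p_1^*}(\Om)$ and $\eps>0$. Since $\Om$ is bounded, $L^{\infty}(\Om)$ is dense in $L^{p_1^*}(\Om)$, so I may choose $\psi\in L^{\infty}(\Om)$ with $|\phi_1-\psi|_{L^{p_1^*}(\Om)}<\eps$ and write
\begin{equation*}
\Iom (M_k-M^*)\phi_1\,dx=\Iom (M_k-M^*)(\phi_1-\psi)\,dx+\Iom (M_k-M^*)\psi\,dx.
\end{equation*}
For the first term, H\"older's inequality and the uniform bound give
\begin{equation*}
\left|\Iom (M_k-M^*)(\phi_1-\psi)\,dx\right|\le |M_k-M^*|_{L^{(p_1^*)'}(\Om)}\,|\phi_1-\psi|_{L^{p_1^*}(\Om)}\le 2C\eps,
\end{equation*}
uniformly in $k$. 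For the second term, $\psi\in L^{\infty}(\Om)$, so Proposition \ref{prop-bounded1} yields $\Iom (M_k-M^*)\psi\,dx\to 0$ as $k\to\infty$. Hence $\limsup_{k\to\infty}\left|\Iom (M_k-M^*)\phi_1\,dx\right|\le 2C\eps$, and letting $\eps\to0$ concludes the claim for $M_k$; the same reasoning applied in the pair $\big(L^{p_2^*}(\Om),\,L^{(p_2^*)'}(\Om)\big)$ disposes of $N_k$.

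The only delicate point—and the one I would flag as the main, if mild, obstacle—is securing $M^*\in L^{(p_1^*)'}(\Om)$ (and $N^*\in L^{(p_2^*)'}(\Om)$) so that $M_k-M^*$ genuinely inherits a uniform dual-norm bound; this is precisely where the a.e.\ convergence supplied by Proposition \ref{prop-bounded1} feeds into Fatou's lemma, the $L^1$-information alone being insufficient. Equivalently, one could phrase the whole argument as the weak convergence $M_k\rightharpoonup M^*$ in the reflexive space $L^{(p_1^*)'}(\Om)$ (boundedness plus a.e.\ convergence identifying the weak limit), after which testing against $\phi_1\in L^{p_1^*}(\Om)=\big(L^{(p_1^*)'}(\Om)\big)^*$ is immediate; the density formulation above is adopted only to invoke Propositions \ref{prop-bounded} and \ref{prop-bounded1} verbatim.
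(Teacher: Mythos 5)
Your argument is correct and follows essentially the same route as the paper's proof: approximate the test function by a bounded one that is $\eps$-close in $L^{p_1^*}(\Om)$, split the integral, control the remainder by H\"older's inequality against the uniform $L^{(p_1^*)'}(\Om)$ bound from Proposition \ref{prop-bounded}, and dispatch the approximant via Proposition \ref{prop-bounded1} (the paper approximates $\phi_1$ by $\Phi_\eps\in C_c^{\infty}(\Om)$ in the $X_{0,s_1,p_1}(\Om)$ norm and then uses the embedding, whereas you pass to $L^{p_1^*}(\Om)$ first and approximate by $L^{\infty}(\Om)$ functions, which is a cosmetic difference). Your explicit verification that $M^*\in L^{(p_1^*)'}(\Om)$ via Fatou's lemma is a point the paper leaves implicit, and is a welcome addition.
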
	
\begin{proof}
We notice that the proof holds true by Proposition \ref{prop-bounded1}
if $\phi_1$ and $\phi_2$ belong to $C_c^{\infty}(\Om)$. 
We now suppose that $\phi_1$ belongs to $X_{0, s_1, p_1}(\Om)$. Our objective is to show that 
\begin{gather}\lab{eq:M^*}
\Iom M_k \phi_1 \, dx \to \Iom M^* \phi_1\, dx.
\end{gather}
We choose $\eps>0$ sufficiently small. Since $C_c^{\infty}(\Om)$ is dense in $X_{0,s_1,p_1}(\Om)$, there exists $\Phi_\eps \in C_c^{\infty}(\Om)$ such that $[ \phi_1-\Phi_\eps]_{s_1, p_1} \leq \eps$.
Indeed, we have,
\begin{equation*}
\Iom M_k\phi_1 \,dx-\Iom M^*\phi_1\,dx=\Iom (M_k-M^*)(\phi_1-\Phi_\eps)\,dx+\Iom (M_k-M^*)\Phi_\eps \,dx,
\end{equation*}
which yields,
\begin{equation}\label{eq:+}
\Bigg|\Iom M_k\phi_1 \,dx-\Iom M^*\phi_1\,dx\Bigg|
\leq 
\Iom |M_k-M^*||\phi_1-\Phi_\eps|\,dx+ {\Big|
\Iom (M_k-M^*)\Phi_\eps \,dx\Big|.}
\end{equation}
Now, using H\"older's inequality, we have,
\begin{equation*}
    \Iom |M_k-M^*||\phi_1-\Phi_\eps|\,dx \leq \Big( \int_\Om|M_k-M^*|^{R_{\al,\ba}} \Big)^{\frac{1}{R_{\al,\ba}}}\Big(\int_\Om |\phi_1-\Phi_\eps|^{p_1^*}\,dx \Big)^{\frac{1}{p_1^*}}.
\end{equation*}
Applying the continuous embedding $X_{0, s_1, p_1}(\Om) \hookrightarrow L^{p_1^*}(\Om)$, we get, $| \phi_1-\Phi_\eps|_{L^{p_1^*}(\Om)} \leq \frac{\eps}{2}.$
{{
Using Proposition \ref{prop-bounded1} for $M_k$, for  $\eps>0$ sufficiently small and $k$ large enough, we deduce that
\begin{gather*}
    \Big|
\Iom (M_k-M^*)\Phi_\eps \,dx\Big| \leq \frac{\eps}{2}
\end{gather*}    
}}
Combining altogether we obtain using (\ref{eq:+}),
\begin{gather*}
\Big| \Iom M_k \phi_1 \, dx - \Iom M^* \phi_1\, dx \Big| \leq \frac{\eps}{2}+\frac{\eps}{2}=\eps,
\end{gather*}
thereby proving (\ref{eq:M^*}). In a similar fashion, we acquire 
\begin{gather*}
\Iom N_k \phi_2 \, dx \to \Iom N^* \phi_2\, dx,
\end{gather*}
for all $\phi_2 \in X_{0, s_2, q_2}(\Om)$. This wraps-up the proof.
\end{proof}	
The subsequent proposition is also thought-provoking for the readers and announced below:
\begin{proposition}\label{first-sol}
	The pair $(u^*, v^*)$ obtained in Lemma \ref{lem-7} is a solution of problem $(\mathcal{P}_{f_1,f_2})$.
\end{proposition}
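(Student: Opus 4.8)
The plan is to verify that the weak limit $(u^*,v^*)$ furnished by Lemma \ref{lem-7} satisfies the weak formulation of $(\mathcal{P}_{f_1,f_2})$, that is, $\<\mathcal{I}'(u^*,v^*),(\phi_1,\phi_2)\>=0$ for every $(\phi_1,\phi_2)\in X$. This is achieved by passing to the limit $k\to\infty$ in the approximate Euler--Lagrange equations already obtained for the minimizing sequence $\{(u_k,v_k)\}$, handling the two components separately.

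Recall from the discussion preceding Proposition \ref{prop-bounded} that, since $\la_k\to 0$, the sequence satisfies $\<\mathcal{I}'(u_k,v_k),(\phi_1,\phi_2)\>\to 0$ for all $(\phi_1,\phi_2)\in X$. Testing the first component against $\phi_1\in X_{0,s_1,p_1}(\Om)$ (taking $\phi_2=0$) and dividing by $\al+1$, this reads
\begin{equation*}
\int_{\R^{2N}}\frac{|u_k(x)-u_k(y)|^{p_1-2}(u_k(x)-u_k(y))(\phi_1(x)-\phi_1(y))}{|x-y|^{N+s_1p_1}}\,dx\,dy=\Iom M_k\phi_1\,dx+\<f_1,\phi_1\>_1+o(1),
\end{equation*}
where the $o(1)$ collects the vanishing remainder (equivalently, the term $\<f_k,\phi_1\>$ with $f_k\to0$ strongly in $X^*_{0,s_1,p_1}(\Om)$).

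I would then pass to the limit term by term. The essential point for the nonlocal left-hand side is the factorisation of the kernel,
\begin{equation*}
\frac{1}{|x-y|^{N+s_1p_1}}=\frac{1}{|x-y|^{(N+s_1p_1)/p_1'}}\cdot\frac{1}{|x-y|^{(N+s_1p_1)/p_1}},
\end{equation*}
which rewrites the integrand as the product of the quantity appearing in \eqref{eq:u_k} and the function $\frac{\phi_1(x)-\phi_1(y)}{|x-y|^{(N+s_1p_1)/p_1}}$, which lies in $L^{p_1}(\R^{2N})$ precisely because $[\phi_1]_{s_1,p_1}^{p_1}<\infty$. Hence the weak convergence \eqref{eq:u_k} in $L^{p_1'}(\R^{2N})$, paired against this fixed $L^{p_1}$ function, forces the left-hand side to converge to the same expression with $u_k$ replaced by $u^*$. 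On the right, the preceding Corollary gives $\Iom M_k\phi_1\,dx\to\Iom M^*\phi_1\,dx$, while $\<f_1,\phi_1\>_1$ is independent of $k$ and the remainder is $o(1)$. Passing to the limit yields exactly the weak form of the first equation of $(\mathcal{P}_{f_1,f_2})$; repeating the argument verbatim with \eqref{eq:v_k}, the Corollary for $N_k$, and $g_k\to0$ gives the second equation.

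The delicate step is the convergence of the nonlinear nonlocal term on the left: the weak convergence $(u_k,v_k)\deb(u^*,v^*)$ in $X$ does \emph{not} by itself imply that $|u_k(x)-u_k(y)|^{p_1-2}(u_k(x)-u_k(y))$ converges weakly to the analogous quantity built from $u^*$. This is precisely the content of \eqref{eq:u_k}--\eqref{eq:v_k}, which rest on the a.e.\ convergence of $u_k$ and $v_k$ supplied by the compact embeddings together with the $L^{p_i'}$-boundedness of the sequences. Granting these, the remaining passage to the limit is routine and requires no further compactness. Since the resulting identities hold for every $(\phi_1,\phi_2)\in X$, we conclude that $\<\mathcal{I}'(u^*,v^*),(\phi_1,\phi_2)\>=0$, i.e.\ $(u^*,v^*)$ is a weak solution of $(\mathcal{P}_{f_1,f_2})$.
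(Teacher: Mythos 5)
Your proposal is correct and follows essentially the same route as the paper: both pass to the limit in the approximate Euler--Lagrange equations componentwise, handling the nonlocal term via the weak $L^{p_i'}(\R^{2N})$ convergences \eqref{eq:u_k}--\eqref{eq:v_k} paired against the fixed $L^{p_i}$ function $\frac{\phi_i(x)-\phi_i(y)}{|x-y|^{(N+s_ip_i)/p_i}}$, and the nonlinear term via the Corollary on $M_k$ and $N_k$. The paper additionally records that $(u^*,v^*)\in\Theta$ and $\mathcal{I}(u^*,v^*)=m<0$, but those assertions go beyond the statement of the proposition itself.
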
	
\begin{proof}
Let $\phi_i \in X_{0, s_i, p_i}(\Om)$ for $i=1,2$. We define $\mathcal{I}'|_{u}$ and $\mathcal{I}'|_{v}$ for $(u, v) \in X$ by
\begin{gather*}
    \< \mathcal{I}'|_{u}(u, v), \phi_1\>=\<\mathcal{I}'(u, v), (\phi_1,0)\>
\end{gather*}
and 
\begin{gather*}
    \< \mathcal{I}'|_{v}(u, v), \phi_2\>=\<\mathcal{I}'(u, v), (0, \phi_2)\>.
\end{gather*}
Putting $u=u_k$ and $v=v_k$ we get,
\begin{equation}\label{eq:-u}
    \begin{aligned}
    \<\mathcal{I}'|_{u}(u_k, v_k), \phi_1\>=\< (-\De)^{s_1}_{p_1} u_k, \phi_1\>-\Iom u_k |u_k|^{\al-1}|v_k|^{\ba+1}\phi_1\,dx-\<f_1, \phi_1\>-\<f_k, \phi_1\>
    \end{aligned}
\end{equation}
and 
\begin{equation}\label{eq:-v}
    \begin{aligned}
    \<\mathcal{I}'|_{v}(u_k, v_k), \phi_2\>=\< (-\De)^{s_2}_{p_2} v_k, \phi_2\>-\Iom |v_k|^{\al+1} v_k |v_k|^{\ba+1}\phi_2\,dx-\<f_2, \phi_2\>-\<g_k, \phi_2\>.
    \end{aligned}
\end{equation}
Note that 
$$
\frac{\phi_1(x)-\phi_1(y)}{|x-y|^{\frac{N+s_1p_1}{p_1}}} \in L^{p_1}(\Rn)\;\; \text{and}\;\; \frac{\phi_2(x)-\phi_2(y)}{|x-y|^{\frac{N+s_2p_2}{p_2}}} \in L^{p_2}(\Rn)
$$
which gives 
$$
\lim_{k\to \infty}\< (-\De)^{s_1}_{p_1} u_k, \phi_1\> = \< (-\De)^{s_1}_{p_1} u^*, \phi_1\> \;\;\text{and}\;\; \lim_{k\to \infty}\< (-\De)^{s_2}_{p_2} v_k, \phi_2\> = \< (-\De)^{s_2}_{p_2} v^*, \phi_2\>.
$$
On taking the limit and using (\ref{eq:u_k}) and (\ref{eq:v_k})
we obtain,
\begin{equation*}
    \begin{aligned}
    \<\mathcal{I}'|_{u}(u_k, v_k), \phi_1\>=\< (-\De)^{s_1}_{p_1} u^*, \phi_1\>-\Iom u^* |u^*|^{\al-1}|v^*|^{\ba+1}\phi_1\,dx-\<f_1, \phi_1\>
    \end{aligned}
\end{equation*}
and
\begin{equation*}
    \begin{aligned}
    \<\mathcal{I}'|_{v}(u_k, v_k), \phi_2\>=\< (-\De)^{s_2}_{p_2} v^*, \phi_2\>-\Iom  |v^*|^{\al+1}|v^*|^{\ba+1} v^* \phi_2\,dx-\<f_2, \phi_2\>.
    \end{aligned}
\end{equation*}
Moreover, using (\ref{eq:I}) and (\ref{eq:I'}) we obtain 
\begin{equation*}
    \begin{aligned}
    \< (-\De)^{s_1}_{p_1} u^*, \phi_1\>-\Iom u^* |u^*|^{\al-1}|v^*|^{\ba+1}\phi_1\,dx-\<f_1, \phi_1\>=0
    \end{aligned}
\end{equation*}
for every $\phi_1$ in $X_{0, s_1, p_1}(\Om)$ and 
\begin{equation*}
    \begin{aligned}
    \< (-\De)^{s_2}_{p_2} v^*, \phi_2\>-\Iom  |v^*|^{\al+1}|v^*|^{\ba+1}v^* \phi_2 \,dx-\<f_2, \phi_2\>=0
    \end{aligned}
\end{equation*}
for every $\phi_2$ in $X_{0, s_2, p_2}(\Om)$
As a result, we deduce that,
\begin{equation*}
    \begin{aligned}
    \begin{cases}
    (-\De)^{s_1}_{p_1}u^*&=u^*|u^*|^{\al-1}|v^*|^{\ba+1}+f_1 \,\text{ in }\, \Om,\\
    (-\De)^{s_2}_{p_2}v^*&=u^*|u^*|^{\al+1}|v^*|^{\ba-1}+f_2 \,\text{ in }\, \Om.
    \end{cases}
    \end{aligned}
\end{equation*}

This implies, $(u^*, v^*)$ is a weak solution of $(\mathcal{P}_{f_1, f_2})$.
Contrarily, this solution $(u^*, v^*)$ satisfies the following:
\begin{itemize}
    \item [(i)]
    $\< \mathcal{I}'(u^*, v^*), (u^*, v^*)\>=0$,
    \item [(ii)]
    $\mathcal{I}(u^*, v^*)=m<0$.
\end{itemize}
By (i), we note that $(u^*, v^*)$ belongs to $\Theta$. As $(u^*, v^*)$ is the solution of $(\mathcal{P}_{f_1, f_2})$, we obtain (i) by taking into account $(\phi_1, \phi_2)=(u^*, v^*)$.

To see the proof of (ii), we note that (i) implies 
$m \leq \mathcal{I}(u^*, v^*)$, since $m:=\inf_{(u,v) \in \Theta}\mathcal{I}(u,v)$. Otherwise, due to the weak semi-continuity of $\mathcal{I}|_{\Theta}$ and $\mathcal{I}(u_k, v_k)< m + \frac{1}{k}$ yields,
$$ \mathcal{I}(u^*, v^*) \leq \liminf_{k \to +\infty} \mathcal{I}(u_k, v_k) \leq m.
$$
This in turn yields us,
\begin{gather*}
    m=\lim_{k \to \infty} \mathcal{I}(u_k, v_k)=\mathcal{I}(u^*, v^*).
\end{gather*}
Using Lemma \ref{lem-4}, we get $\mathcal{I}(u^*, v^*)<0$. This finishes our proof.
\end{proof}

\begin{remark}\label{remark}
Before wrapping-up the article, as a future direction it would be inspiring to study analogous set of problems by considering even more general kernel, again both fractional and nonlinear, by taking into account rough coefficients. Thus, by replacing the fractional $(p_1, p_2)$ Laplacians in ${(\mathcal{P}_{f_1,f_2})}$ with the nonlinear integro-differential operator given by
$$\mathcal{L}_K u(x)=P.V. \int_{\Rn}|u(y)-u(x)|^{p-2}(u(y)-u(x))K(x,y)dy,\, x \in \Rn$$
where the symmetric function $K$ is a Gagliardo-type kernel (namely, an $(s, p)$-kernel) with measurable coefficients. Owing to this aspiration, we cite (\cite{Ref-1},\cite{Ref-2}) for interested readers which treats with the general nonlinear fractional operators above.
\end{remark}

\end{document}